\documentclass[11pt,a4paper,oneside,reqno]{amsart}

\usepackage[T1]{fontenc}
\usepackage{lmodern}
\usepackage{microtype}
\usepackage[a4paper,margin=27mm,headheight=14pt,headsep=7mm]{geometry}
\usepackage{comment}
\usepackage{xcolor}

\usepackage{amsmath,amsthm}
\usepackage{amssymb}

\usepackage{enumitem}
\usepackage{url}

\usepackage{caption}
\usepackage{array} % add to the preamble to make the table
\newcolumntype{M}[1]{>{\centering\arraybackslash}m{#1}} % add to the preamble to make the table
\usepackage{stackengine} % add to the preamble to make the table
\makeatletter
\newcommand{\customlabel}[2]{%
	\protected@write \@auxout {}{\string \newlabel {#1}{{#2}{}}}}
\makeatother

\newenvironment{proofbold}[1][Proof]{%
  \begin{proof}[#1:]\ignorespaces
}{%
  \end{proof}%
  \par\addvspace{0.6\baselineskip}%
}
\newcommand{\mbinom}[2]{\bigg(\genfrac{}{}{0pt}{}{#1}{#2}\bigg)}

\newtheorem{theorem}{Theorem}[section]
\newtheorem{corollary}[theorem]{Corollary}
\newtheorem{lemma}[theorem]{Lemma}

\theoremstyle{plain}

\newtheorem{remark}[theorem]{Remark}

\numberwithin{equation}{section}

\newtheorem*{theorem*}{Theorem}

\usepackage{dsfont}

\DeclareMathAlphabet{\mathpzc}{OT1}{pzc}{m}{it}

\begin{document}

\title[A recurrence in logarithmic M\"obius sums]{A recurrence in logarithmic M\"obius sums with coprimality conditions via Euler--Maclaurin}

\author{Sebastian Zuniga-Alterman}

\newcommand{\Addresses}{{
  {\footnotesize
\ \\
\noindent S.~ZUNIGA-ALTERMAN - \textsc{School of Science, The University of New South Wales, Canberra, Northcott Drive, Campbell ACT 2612, AUSTRALIA.}\\
  \texttt{s.zuniga$\_$alterman@unsw.edu.au}

}}}

\thanks{The author has been supported by the Australian Research Council Discovery Project DP240100186.}

\begin{abstract}
We derive a recurrence for logarithmically weighted Möbius sums by combining Möbius inversion with the Euler--Maclaurin summation formula. Consequently, starting from estimates of orders zero, one and two, we obtain polynomial approximations for these sums for every nonnegative integer exponent of $\log$. We then extend these estimates to sums with coprimality conditions via a convolution identity and to sums weighted by the reciprocal of the sum-of-divisors function. We also study the nonnegativity of the latter sums for every positive integer logarithmic order, establishing that they never change sign.
\end{abstract}

\maketitle

\normalsize

%%%%%%%%%%%%%%%%%%%%%%%%%%%%%%%%%%%%%%%%%%%%%%%%%%%%%%%%%%%%%%
\section{Introduction}
%%%%%%%%%%%%%%%%%%%%%%%%%%%%%%%%%%%%%%%%%%%%%%%%%%%%%%%%%%%%%%

For $X>0$, $k\in\mathbb{Z}_{\geq 0}$ and
$q\in\mathbb{Z}_{\geq 1}$, consider the logarithmic
M\"obius sums
\begin{align}\label{definitions}
\mathbf{\check m}_q^{(k)}(X)
=\sum_{\substack{n\leq X\\(n,q)=1}}
\frac{\mu(n)}{n}\log^k\bigg(\frac{X}{n}\bigg),\qquad\mathbf{\tilde m}_q^{(k)}(X)
=\sum_{\substack{n\leq X\\(n,q)=1}}
\frac{\mu(n)}{\sigma(n)}
\log^k\bigg(\frac{X}{n}\bigg),
\end{align}
where $\mu$ is the M\"obius function and
$\sigma(n)=\sum_{d|n}d$ is the sum-of-divisors function.
The sums are empty when $0<X<1$ and for $k=0$, the logarithmic weight is identically $1$,
including when $n=X$. We also use the notation $
m_q(X)=\mathbf{\check m}_q^{(0)}(X)$,
$\check m_q(X)=\mathbf{\check m}_q^{(1)}(X)$,
$\check{\check m}_q(X)=\mathbf{\check m}_q^{(2)}(X)$ and we omit the subscript when $q=1$.

We derive estimates for these sums of arbitrary nonnegative integer
exponent $k$ from estimates of orders $0,1,2$.
Indeed, by combining M\"obius inversion with the Euler--Maclaurin
summation formula, we obtain a recurrence for
$\mathbf{\check m}^{(k)}(X)$, with a polynomial main term
in $\log X$ and an explicit remainder.
Convolution identities then transfer these estimates
to sums with coprimality conditions and to
$\mathbf{\tilde m}_q^{(k)}$.

Our main estimates are summarised in the following theorem.
For $s\in\mathbb C$ with $\Re s>0$, write
\begin{equation}\label{functions}
\varphi_s:q\in\mathbb{Z}_{\geq 1}\mapsto
q^s\prod_{p|q}\bigg(1-\frac{1}{p^s}\bigg),
\qquad
\mathpzc p_s:q\in\mathbb{Z}_{\geq 1}\mapsto
\prod_{p|q}\frac{p+1}{p+1-p^{1-s}},
\end{equation}
where $\varphi_s$ is the generalised Euler totient function.

\begin{theorem}\label{main_estimates}
Put $\theta=1-\frac{1}{12\log 10}$.
There are explicit positive constants $A_k,B_k$, $\tilde{A}_k,\tilde{B}_k$,
depending only on $k$, such that, for every
$k\in\mathbb{Z}_{\geq 0}$, $q\in\mathbb{Z}_{\geq 1}$
and $X>0$,
\begin{align*}
\big|\mathbf{\check m}_q^{(k)}(X)-Q_{k,q}(\log X)\big|
&\leq
\frac{\sqrt{q}}{\varphi_{1/2}(q)}
\frac{A_k}{\sqrt{X}}
+\frac{q^\theta}{\varphi_\theta(q)}
\frac{B_k\,\mathds{1}_{X\geq 10^{12}}(X)}{\log X},\\
\big|\mathbf{\tilde m}_q^{(k)}(X)
-\widetilde Q_{k,q}(\log X)\big|
&\leq
\mathpzc p_{1/2}(q)\frac{\tilde{A}_k}{\sqrt{X}}
+\mathpzc p_\theta(q)
\frac{\tilde{B}_k\,\mathds{1}_{X\geq 10^{12}}(X)}{\log X}.
\end{align*}
Here $Q_{0,q}=\widetilde Q_{0,q}=0$ and for $k\geq 1$, the polynomials $Q_{k,q}$ and
$\widetilde Q_{k,q}$ have degree $k-1$ and respective
leading coefficients $
\frac{kq}{\varphi(q)}$ and $k\zeta(2)\mathpzc p_1(q)$.
Their coefficients are given by \eqref{Qkq} and
\eqref{tilde_Pol}, and the constants $A_k,B_k$ and $\tilde{A}_k,\tilde{B}_k$ are
specified in \eqref{AB}, \eqref{Ak_rec}, \eqref{Bk_rec} and \eqref{AB_tilde}, respectively.
\end{theorem}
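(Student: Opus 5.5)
We prove the theorem in three stages. First we treat the case $q=1$ by a recurrence in $k$. Next we transfer the result to general $q$ through a convolution identity. Finally we handle $\mathbf{\tilde m}_q^{(k)}$ in the same way.

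\emph{The case $q=1$.} The starting point is Möbius inversion: for $X\geq 1$ and $k\geq 0$,
\[
\sum_{d\leq X}\frac{1}{d}\,\mathbf{\check m}^{(k)}\Big(\frac{X}{d}\Big)
=\sum_{m\leq X}\frac{1}{m}\log^k\Big(\frac{X}{m}\Big)\sum_{n\mid m}\mu(n)
=\log^k X .
\]
Now apply Euler--Maclaurin to the harmonic-type sum $\sum_{m\leq X}\frac1m\log^{k}(X/m)$, expanding to a fixed order. This gives
\[
\frac{\log^{k+1}X}{k+1}+\gamma_0\log^kX+\dots
\]
where the coefficients involve the Stieltjes constants, plus a remainder $O_k(X^{-1}\log^k X)$.

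Combining this expansion with the inversion identity expresses $\mathbf{\check m}^{(k+1)}(X)$ as $(k+1)\log^k X$ minus a linear combination of $\mathbf{\check m}^{(j)}(X)$ for $j\leq k$, plus a remainder. The remainder is a Möbius-weighted sum of the Euler--Maclaurin error terms. Concretely, I would write
\[
\log^{k+1}(X/n)=\int\!\!\int\cdots
\]
and exchange the order of summation, or equivalently differentiate the generating relation in $\log X$.

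The polynomials $Q_{k,1}$ are then defined by the same recurrence, with $Q_{0}=0$. By induction $Q_{k,1}$ has degree $k-1$ and leading coefficient $k$. The error constants $A_{k+1},B_{k+1}$ are built from $A_0,\dots,A_k$ and $B_0,\dots,B_k$ together with the explicit base estimates for $m$, $\check m$ and $\check{\check m}$. The base estimates have the shape $A/\sqrt X+B\,\mathds 1_{X\ge10^{12}}/\log X$, which I take as known from earlier literature.

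\emph{The main obstacle.} The hardest part is controlling the remainder $\sum_{n\leq X}\frac{\mu(n)}{n}R(X/n)$, where $R(y)\ll y^{-1}\log^k y$. A trivial bound loses a factor of $\log X$, and also spoils the $1/\sqrt X$ shape for small $X$. My first attempt is to integrate by parts against $m(t)$, using the order-zero estimate $|m(t)|\le A_0/\sqrt t+\cdots$. This yields a convergent integral of size $O(X^{-1/2})$ for the $A$-part. For the $B$-part, the $1/\log t$ savings should give $O(1/\log X)$ after splitting the range at $\sqrt X$. For $X<10^{12}$ I would absorb everything into the $A_k/\sqrt X$ term by verifying that the bounds are compatible there.

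\emph{General $q$.} We use the identity
\[
\sum_{(n,q)=1}\frac{\mu(n)}{n}f(n)=\sum_{\ell\mid q^\infty}\frac{1}{\ell}\sum_n \frac{\mu(n)}{n}f(\ell n),
\]
which comes from $\mathds 1_{(n,q)=1}\mu(n)=\sum_{\ell\mid \gcd(n,q^\infty)}\mu(n/\ell)$ (checked via Dirichlet series). This gives
\[
\mathbf{\check m}_q^{(k)}(X)=\sum_{\ell\mid q^\infty}\frac1\ell\,\mathbf{\check m}^{(k)}\Big(\frac X\ell\Big).
\]
Inserting the $q=1$ estimate, the main terms sum to
\[
\sum_{\ell}\frac1\ell\,Q_{k,1}(\log X-\log\ell),
\]
which is a polynomial $Q_{k,q}$ whose leading coefficient is $k\sum_{\ell\mid q^\infty}1/\ell=kq/\varphi(q)$. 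The error terms sum as follows:
\[
\sum_\ell \ell^{-1}(X/\ell)^{-1/2}=X^{-1/2}\frac{\sqrt q}{\varphi_{1/2}(q)}.
\]
For the $1/\log$ term, I would use $\frac{1}{\log(X/\ell)}\le \frac{\ell^{1-\theta}}{\log X}$ in a suitable range, giving the factor $\sum_\ell \ell^{-\theta}=q^\theta/\varphi_\theta(q)$. The terms with $X/\ell<10^{12}$ carry only the $\sqrt{\cdot}$ part, so they are harmless. This step is exactly where the choice $\theta=1-1/(12\log10)$ comes from.

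\emph{The sums $\mathbf{\tilde m}_q^{(k)}$.} We write $\mu(n)n/\sigma(n)=\sum_{d\ell=n}\mu(d)\,g(\ell)$ for squarefree $n$, where $g$ is multiplicative and supported on squarefree integers, with
\[
g(p)=-\frac{1}{p+1}.
\]
This lets us express $\mathbf{\tilde m}_q^{(k)}$ as a convolution of the sums $\mathbf{\check m}_{q\ell}^{(k)}$ with absolutely convergent weights. The same summation as in the general-$q$ step then produces the Euler-product factors $\mathpzc p_s(q)$ and the leading coefficient $k\zeta(2)\mathpzc p_1(q)$.
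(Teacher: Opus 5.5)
Your outline follows the paper's route. Your inversion identity is Lemma~\ref{mobinversion} applied to $X\mapsto X\log^kX$ on $[1,\infty)$, and the remainder is handled by partial summation against $m$. General $q$ comes from $\mathbf{\check m}_q^{(k)}(X)=\sum_{\ell\mid q^\infty}\ell^{-1}\mathbf{\check m}^{(k)}(X/\ell)$, with $\theta$ forced by the decrease of $t^{\theta-1}/\log(X/t)$ on $[1,X/10^{12}]$. Your two-step convolution for $\mathbf{\tilde m}_q^{(k)}$ (through $\mathbf{\check m}_{q\ell}^{(k)}$) gives the same majorant $H_q(s)$ as the paper's one-step weights $w_q$. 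However, two steps are wrong or missing.

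\textbf{(i) Sign error.} The sign in your $\sigma$-identity is wrong. At $n=p$ you need $-\frac{p}{p+1}=-1+g(p)$, so $g(p)=+\frac1{p+1}$, i.e.\ $g=\mu^2/\sigma$. The error bounds only involve $|g|$, but with your sign the main term would have leading coefficient $\frac{kq}{\varphi(q)}\prod_{p\nmid q}\frac{p^2-2}{p^2-1}$ instead of $k\zeta(2)\mathpzc p_1(q)$.

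\textbf{(ii) The range $0<X<1$.} You never treat it, and it matters. Your coprime step sums the $q=1$ bound over \emph{all} $\ell\mid q^\infty$, including the infinitely many $\ell>X$. For these $\mathbf{\check m}^{(k)}(X/\ell)=0$ while $Q_k(\log(X/\ell))\neq0$, and your $q=1$ argument, resting on an identity valid only for $X\ge1$, says nothing there. You need separately $\sqrt Y\,|Q_k(\log Y)|\le A_k$ for $0<Y<1$. The paper derives this from \eqref{Pol_2}, the bound $\sqrt Y|\log Y|^{k-1}\le(2(k-1)/e)^{k-1}$ and the check $k(k-1)\sqrt2\,U_{k-2}\ge k(2(k-1)/e)^{k-1}$. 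At $k=2$ this range even determines the constant: $A_2=4e^{\gamma/2-1}$ is exactly $\sup_{0<Y<1}\sqrt Y\,|2\log Y-2\gamma|$.

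\textbf{Integration by parts.} The bound $R(y)\ll y^{-1}\log^ky$ alone does not justify it. What makes it work is that, for the weight $\log^k(Y/d)$ with $k\ge1$, the remainder $S_k$ is continuous with $S_k'(t)=\frac kt S_{k-1}(t)=O(t^{-2}\log^{k-1}t)$. The boundary term $m(X)S_k(1)=(-1)^{k+1}\gamma_k\,m(X)$ then cancels the $\mathbf{\check m}^{(0)}$ term of your recurrence. For weight $\log^0$ the remainder jumps by $1/n$ at each integer and partial summation against $m$ gives no saving. This is why orders $0$ and $1$ must be imported, whereas the paper gets order $2$ for $X\ge10^{12}$ from the recurrence itself, via \eqref{R_0}.

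\textbf{The specified constants.} The statement fixes them, and your argument cannot reach all of them. Transferring $\mathbf{\tilde m}_q^{(k)}$ down to $\mathbf{\check m}^{(k)}$ costs the factor $H_1(1/2)\approx3.575$ in the $1/\sqrt X$ term. So for $k\in\{0,1\}$ you get $\tilde A_0\approx3.575\sqrt2$ and $\tilde A_1\approx3.575$, not $\sqrt2$ and $\zeta(2)$. The paper obtains \eqref{AB_tilde} for $k\le1$ from extra inputs:
\begin{enumerate}[label=(\alph*)]
\item Helfgott's direct bounds $|\mathbf{\tilde m}^{(0)}(X)|\le\sqrt2/\sqrt X$ and $|\mathbf{\tilde m}^{(1)}(X)-\zeta(2)|\le\zeta(2)/\sqrt X$ for $X\le10^{12}$;
\item for $X\ge10^{12}$, moving the excess over $\tilde A_k/\sqrt X$ into the $1/\log X$ term;
\item a different convolution $\sum_{d\mid q^\infty}v_q(d)\,\mathbf{\tilde m}^{(k)}(X/d)$, whose weights satisfy $\sum_d v_q(d)d^{1-s}=\mathpzc p_s(q)$ exactly.
\end{enumerate}
Similarly, splitting at $\sqrt X$ for the $1/\log$ part does not give \eqref{Bk_rec}, which comes from \eqref{inek}.

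With (i) and (ii) repaired, your route proves the theorem with some explicit constants, but not with the specified ones.
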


Theorem \ref{main_estimates} follows from
Theorems \ref{coprime_polynomial} and
\ref{tilde_general_bound}, using the constants constructed
in Theorem \ref{general_polynomial}.
In particular, for each fixed $k$ and $q$, both
remainders tend to zero as $X\to\infty$.

Further, for $k=0,1,2$, these two functions have a deep connection with the proof of the ternary Goldbach conjecture by H. Helfgott \cite[\S 5--\S 6]{Helfgott19}. They are also crucial at $k=1,2$ in the author's work
\cite{SZA*22} on the secondary term in the asymptotic
expansion of the Selberg sieve sum
\begin{equation*}
\sum_{\substack{d,e\leq U\\(de,q)=1}}
\frac{\mu(d)\mu(e)}{[d,e]}
\log\bigg(\frac{U}{d}\bigg)
\log\bigg(\frac{U}{e}\bigg),
\qquad U>0,
\end{equation*}
where $[d,e]$ denotes the least common multiple of $d$
and $e$.

There is also a direct connection with the generalised von Mangoldt function $\Lambda_k$.
For $q\in\mathbb Z_{\geq1}$, $k\in\mathbb Z_{\geq1}$ and $X>0$, define
\begin{equation*}
\Lambda_{q,k,X}:n\in\mathbb{Z}_{\geq 1}\mapsto
\sum_{\substack{d|n,\ d\leq X,\ (d,q)=1}}
\mu(d)\log^k\bigg(\frac{X}{d}\bigg).
\end{equation*}
Taking $q=1$ and $X=n$, we recover the generalised
von Mangoldt function. Indeed, $\Lambda_{1,k,n}(n)=\Lambda_k(n)=\sum_{d|n}\mu(d)\log^k\big(\frac{n}{d}\big)$,
which vanishes whenever $n$ has more than $k$ distinct prime factors.
Moreover, for fixed $X>0$, we have
\begin{equation*}
\lim_{N\to\infty}\frac{1}{N}
\sum_{n\leq N}\Lambda_{q,k,X}(n)
=\mathbf{\check m}_q^{(k)}(X).
\end{equation*}

In \cite{RamareZuniga-Alterman*26}, O. Ramar\'e and
the author study related M\"obius sums with a complex
parameter $s$, $\Re s>0$.
In particular, \cite[Thm.~1.2]{RamareZuniga-Alterman*26}
establishes that
\begin{equation*}
\sum_{\substack{n\leq X\\(n,q)=1}}
\frac{\mu(n)}{n^s}\log^k\bigg(\frac{X}{n}\bigg)
\geq 0
\end{equation*}
for $s\in\mathbb{R}_{\geq 1}$, $k,q\in\mathbb{Z}_{\geq 1}$ and $X\geq 1$.
Taking $s=1$ shows that
$\mathbf{\check m}_q^{(k)}(X)\geq 0$ for $k\geq 1$,
also for $0<X<1$ since the sum is empty.
Thus logarithmic smoothing of positive integer order always 
produces nonnegative sums, although the unsmoothed sum
\begin{equation*}
\mathbf{\check m}_q^{(0)}(X)
=\sum_{\substack{n\leq X\\(n,q)=1}}
\frac{\mu(n)}{n}
\end{equation*}
may change sign.
The convolution identity in Lemma \ref{tilde_convolution}
allows us to deduce the corresponding property for the
second family.

\begin{theorem}\label{positive}
Let $k,q\in\mathbb{Z}_{\geq 1}$.
Then, for every $X>0$, we have 
$\mathbf{\tilde m}_q^{(k)}(X)\geq 0$.
\end{theorem}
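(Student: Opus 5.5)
We derive nonnegativity of $\mathbf{\tilde m}_q^{(k)}$ from that of $\mathbf{\check m}_{q'}^{(k)}$ (Ramaré--Zuniga-Alterman, Thm.~1.2 with $s=1$) through a convolution identity with nonnegative coefficients. The starting point is an identity for $\mu(n)/\sigma(n)$ on squarefree $n$. For squarefree $n$ we have $\sigma(n)=\prod_{p|n}(p+1)$, so
\begin{equation*}
\frac{n}{\sigma(n)}=\prod_{p|n}\frac{p}{p+1}=\prod_{p|n}\Big(1-\frac{1}{p+1}\Big)=\sum_{d|n}\frac{\mu(d)}{\sigma(d)} .
\end{equation*}
Hence $\mu(n)/\sigma(n)=\sum_{d|n}\frac{\mu(n)\mu(d)}{n\,\sigma(d)}$. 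Writing $n=dm$ with $(d,m)=1$, we get $\mu(n)\mu(d)=\mu(d)^2\mu(m)$. This should be exactly the content of Lemma \ref{tilde_convolution}, presumably in the form
\begin{equation*}
\mathbf{\tilde m}_q^{(k)}(X)=\sum_{\substack{d\leq X\\(d,q)=1}}\frac{\mu^2(d)}{d\,\sigma(d)}\,\mathbf{\check m}_{dq}^{(k)}\Big(\frac{X}{d}\Big).
\end{equation*}

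The derivation goes as follows. Substitute the identity into the definition \eqref{definitions}, interchange the order of summation, and note that $\log(X/n)=\log\big((X/d)/m\big)$. The coprimality conditions $(dm,q)=1$ and $(d,m)=1$ combine to $(d,q)=1$ together with $(m,dq)=1$. The weight $\mu^2(d)/(d\sigma(d))$ is nonnegative. Each inner sum is $\mathbf{\check m}_{dq}^{(k)}(X/d)$, which is nonnegative for $k\geq1$ by the cited theorem, including the case $X/d<1$, where the sum is empty. Therefore $\mathbf{\tilde m}_q^{(k)}(X)\geq0$ for all $X>0$. For $0<X<1$ the claim is trivial anyway.

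The only real step is getting the convolution identity right, with the correct coprimality bookkeeping: the inner modulus must be $dq$ and not $q$. The point to watch is that the inner weight carries $1/m$ exactly, and this holds because $\frac{1}{n\sigma(d)}\cdot d=\frac{1}{m\sigma(d)}$ once we pull out the factor $\frac{1}{d\sigma(d)}$. Nonnegativity of $\mu^2/(d\sigma(d))$ is immediate, so no further obstacle is expected.
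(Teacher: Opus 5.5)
Your proof is correct and is essentially the paper's: your identity $\mathbf{\tilde m}_q^{(k)}(X)=\sum_{(d,q)=1}\frac{\mu^2(d)}{d\sigma(d)}\mathbf{\check m}_{dq}^{(k)}(X/d)$ is exactly the intermediate step in the proof of Lemma~\ref{tilde_convolution}. The only difference is that you apply \cite[Thm.~1.2]{RamareZuniga-Alterman*26} with modulus $dq$, whereas the paper unfolds once more via Lemma~\ref{Idd} to the weights $w_q(n)/n\geq 0$ and uses only the case $q=1$; the two are equivalent, since Lemma~\ref{Idd} writes $\mathbf{\check m}_{dq}^{(k)}$ as a nonnegative combination of values of $\mathbf{\check m}^{(k)}$.
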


In \S\ref{Euler--Maclaurin}, we establish the
Euler--Maclaurin estimate underlying the recurrence
in \S\ref{recurrencerelantionship}.
The initial bounds in \S\ref{bounds} then give
Theorem \ref{general_polynomial}.
Subsequently, \S\ref{coprimality} and \S\ref{tilde} extend
these estimates to the two families in
\eqref{definitions}, with Theorem \ref{positive}
proved in the latter section.

%%%%%%%%%%%%%%%%%%%%%%%%%%%%%%%%%%%%%%%%%%%%%%%%%%%%%%%%%%%%%
\section{Notation and basic definitions}
%%%%%%%%%%%%%%%%%%%%%%%%%%%%%%%%%%%%%%%%%%%%%%%%%%%%%%%%%%%%%

We define $\log^+:X>0\mapsto\max\{\log X,0\}$ and write $\{t\}=t-\lfloor t\rfloor$ for the fractional
part of $t$.
Throughout the paper, for $T>1$, an expression of the form
$\mathds{1}_{X\geq T}(X)E(X)$ is defined to be zero
when $X<T$, even if $E(X)$ is undefined there.

When discussing series of holomorphic functions, normal convergence means that, on every compact subset of the domain, the sum of the suprema of the absolute values of the terms is finite. Throughout this work, we use normal convergence to justify interchanging sums and differentiating series term by term.

\subsection{Bernoulli polynomials and the Euler--Maclaurin summation formula}

The Bernoulli polynomials are defined by $B_0(x)=1$,
$B_k'(x)=kB_{k-1}(x)$ and $\int_0^1B_k(x)\,dx=0$
for $k\geq 1$. In particular,
$B_1(x)=x-\frac{1}{2}$, 
$B_2(x)=x^2-x+\frac{1}{6}$, 
$B_3(x)=x^3-\frac{3}{2}x^2+\frac{1}{2}x$.
Thus
\begin{equation}\label{BernoulliBounds}
\max_{x\in[0,1]}|B_1(x)|=\frac{1}{2},\qquad
\max_{x\in[0,1]}|B_2(x)|=\frac{1}{6},\qquad
\max_{x\in[0,1]}|B_3(x)|=\frac{1}{12\sqrt{3}}.
\end{equation}

For real numbers $0<x<y$ and $f\in C^2([x,y])$,
the Euler--Maclaurin summation formula of order $2$ gives
\begin{align}\label{EulerMaclaurin}
\sum_{x<n\leq y}f(n)
={}&\int_x^y f(t)\,dt
-B_1(\{y\})f(y)+B_1(\{x\})f(x)\nonumber\\
&+\frac{1}{2}B_2(\{y\})f'(y)
-\frac{1}{2}B_2(\{x\})f'(x)
-\frac{1}{2}\int_x^y B_2(\{t\})f''(t)\,dt.
\end{align}

\subsection{Stieltjes constants}

The Stieltjes constants $\gamma_j$ are defined by the
Laurent expansion of the Riemann $\zeta$ function at $1$ as follows
\begin{equation*}
\zeta(s)=\frac{1}{s-1}
+\sum_{j=0}^{\infty}
\frac{(-1)^j\gamma_j}{j!}(s-1)^j.
\end{equation*}
Here $\gamma_0=\gamma$ is the Euler--Mascheroni constant.
Equivalently, we have
\begin{equation}\label{gamma:limit}
\gamma_j
=\lim_{X\to\infty}
\bigg[
\sum_{n\leq X}\frac{\log^j n}{n}
-\frac{\log^{j+1}X}{j+1}
\bigg],
\qquad j\geq 0;
\end{equation}
see \cite{BriggsChowla*55}. The first Stieltjes constants are
\begin{table}[ht]
\centering
\renewcommand{\arraystretch}{1.4}
\begin{tabular}{|r@{${}={}$}r|r@{${}={}$}r|
                r@{${}={}$}r|}
\hline
$\gamma_0$ & $0.5772156649\ldots$ &
$\gamma_1$ & $-0.0728158454\ldots$ &
$\gamma_2$ & $-0.0096903631\ldots$\\
\hline
$\gamma_3$ & $0.0020538344\ldots$ &
$\gamma_4$ & $0.0023253700\ldots$ &
$\gamma_5$ & $0.0007933238\ldots$\\
\hline
$\gamma_6$ & $-0.0002387693\ldots$ &
$\gamma_7$ & $-0.0005272895\ldots$ &
$\gamma_8$ & $-0.0003521233\ldots$\\
\hline
\end{tabular}
\end{table}

%%%%%%%%%%%%%%%%%%%%%%%%%%%%%%%%%%%%%%%%%%%%%%%%%%%%%%%%%%%%%%
\section{An application of the Euler--Maclaurin summation formula}\label{Euler--Maclaurin} 
%%%%%%%%%%%%%%%%%%%%%%%%%%%%%%%%%%%%%%%%%%%%%%%%%%%%%%%%%%%%%%

\begin{lemma}\label{EM}Let $X>0$ and $j\in\mathbb{Z}_{\geq 0}$. Then
\begin{equation}\label{R_j}
\sum_{m\leq X}\frac{\log^jm}{m}=\frac{\log^{j+1}X}{j+1}+\gamma_j+R_j(X)
\end{equation}
where $\gamma_j$ is the $j$-th Stieltjes constant and 
\begin{equation*}
|R_j(X)|\leq \frac{|\log X|^j}{2X}+\frac{P_j(|\log X|)}{X^2}
\end{equation*}
with $P_j$ defined by
\begin{equation}\label{Pol}
P_0:t\geq 0\mapsto\frac{1}{6},\qquad P_j:t\geq 0\mapsto\frac{t^j}{6}+\frac{jt^{j-1}}{4}+\frac{j!}{2}\sum_{i=2}^{j}\frac{t^{j-i}}{(j-i)!2^{i}},\qquad j\geq 1.
\end{equation}
\end{lemma}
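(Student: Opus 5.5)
Apply the Euler--Maclaurin formula \eqref{EulerMaclaurin} to $f(t)=\frac{\log^j t}{t}$ on $(x,y]$ with $y=X$ and let $x\to\infty$ in the opposite direction. Concretely, for $X\geq 1$, write $\sum_{m\leq X}=\sum_{m\leq N}-\sum_{X<m\leq N}$ for a large integer $N$, apply \eqref{EulerMaclaurin} on $(X,N]$, and let $N\to\infty$ using the limit characterisation \eqref{gamma:limit} of $\gamma_j$. At the endpoint $N$, the boundary terms $B_1(\{N\})f(N)$ and $B_2(\{N\})f'(N)$ tend to $0$. The integral $\int_X^N f=\frac{\log^{j+1}N-\log^{j+1}X}{j+1}$ combines with $\sum_{m\leq N}$ to produce $\gamma_j+\frac{\log^{j+1}X}{j+1}$. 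This yields
\begin{equation*}
R_j(X)=-B_1(\{X\})f(X)+\frac12B_2(\{X\})f'(X)+\frac12\int_X^\infty B_2(\{t\})f''(t)\,dt,
\end{equation*}
where the improper integral converges absolutely since $f''(t)\ll \log^j t/t^3$.

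Bound the terms as follows.
\begin{itemize}
\item By \eqref{BernoulliBounds}, the first term is at most $\frac{\log^jX}{2X}$.
\item We have $f'(t)=\frac{j\log^{j-1}t-\log^jt}{t^2}$, and the second term gives at most $\frac{1}{12}\frac{\log^jX+j\log^{j-1}X}{X^2}$.
\item For the integral, I would rather integrate by parts once more, writing $B_2(\{t\})=\frac13\frac{d}{dt}B_3(\{t\})$, so that the tail becomes $-\frac16 B_3(\{X\})f''(X)-\frac16\int_X^\infty B_3(\{t\})f'''(t)\,dt$. Alternatively, I would bound $|B_2|\leq\frac16$ directly: $\frac1{12}\int_X^\infty|f''|$. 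The cleanest route is to note that $\int_X^\infty |f''|\le$ a total-variation bound. If $f''$ had constant sign on $(X,\infty)$, this would equal $|f'(X)|$, giving at most $\frac{\log^jX+j\log^{j-1}X}{X^2}$ with coefficient $\frac1{12}$. Added to the previous $\frac1{12}$, this yields exactly the leading pieces $\frac{t^j}{6}+\frac{jt^{j-1}}{12}$ of $P_j$.
\end{itemize}

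Sign changes of $f''$ near small $t$ prevent this shortcut in general. Hence I expect the intended computation to be the direct one: expand $f''(t)=\frac{2\log^jt-3j\log^{j-1}t+j(j-1)\log^{j-2}t}{t^3}$, bound it by $\frac{2\log^jt+3j\log^{j-1}t+j(j-1)\log^{j-2}t}{t^3}$, and use the exact formula
\begin{equation*}
\int_X^\infty\frac{\log^i t}{t^3}\,dt=\frac{i!}{X^2}\sum_{l=0}^i\frac{\log^{i-l}X}{(i-l)!\,2^{l+1}}.
\end{equation*}
Collecting powers of $\log X$ then gives a polynomial of the shape $\frac{t^j}{6}+\frac{jt^{j-1}}{4}+\frac{j!}{2}\sum_{i\geq2}\frac{t^{j-i}}{(j-i)!2^i}$. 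Matching exactly the coefficients in \eqref{Pol} is the main obstacle, a careful but routine bookkeeping of the factorial sums, possibly after regrouping $f''=\frac{d}{dt}f'$ terms. The case $j=0$ is immediate: $f''=2/t^3$, which gives $\frac1{12}\cdot\frac1{X^2}+\frac1{12}\cdot\frac1{X^2}=\frac{1}{6X^2}$.

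Finally, treat $0<X<1$ separately. There the sum is empty, so $R_j(X)=-\frac{\log^{j+1}X}{j+1}-\gamma_j$, and the claimed bound must be checked directly using $|\log X|$. Since $X^{-1}$ and $X^{-2}$ blow up, the right side dominates. One needs e.g. $\frac{P_j(|\log X|)}{X^2}\geq\frac{|\log X|^{j+1}}{j+1}+|\gamma_j|$, which follows from $X^{-2}=e^{2u}\geq\sum (2u)^i/i!$ with $u=|\log X|$, together with the known bound $|\gamma_j|\le j!/2^{j}$-type estimates (for small $j$, the tabulated values). Verifying this elementary inequality, together with the coefficient matching above, completes the proof.
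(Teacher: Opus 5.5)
For $X\geq 1$ your plan is the paper's argument. You use the same integral formula for $R_j(X)$; your route through $(X,N]$ with $N\to\infty$ reaches it a little more cleanly than the paper's passage through $(1,X]$ followed by the identification $I_j=\gamma_j$. The termwise bounds are also the same. The bookkeeping you leave undone does close. The coefficient of $\log^jX/X^2$ is $\frac1{12}+\frac1{12}$, and that of $\log^{j-1}X/X^2$ is $\frac j{12}+\frac j{24}+\frac j8=\frac j4$. For $2\le i\le j$, the three integrals contribute $\frac{1+3+2}{12}\cdot\frac{j!}{(j-i)!\,2^i}$. Your aside that a total-variation bound would reproduce ``the leading pieces $\frac{t^j}{6}+\frac{jt^{j-1}}{12}$ of $P_j$'' is wrong, since $P_j$ contains $\frac{jt^{j-1}}{4}$, but you abandon that line anyway.

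The gap is the case $0<X<1$. As $X\to1^-$ one has $R_j(X)\to-\gamma_j$. The right-hand side tends to $\frac23$ for $j=0$, to $\frac14$ for $j=1$, and to $P_j(0)=\frac{j!}{2^{j+1}}$ for $j\geq2$. So your direct check amounts to proving $|\gamma_j|\le j!/2^{j+1}$ for every $j\geq 2$. The ``$j!/2^j$-type'' estimate you invoke is too weak by a factor $2$. Any bound strong enough would be external input on all Stieltjes constants (a classical bound from the literature plus tabulated values), which you neither state nor prove. Moreover, for $j=0$ your sufficient inequality $P_0/X^2\geq|\log X|+|\gamma_0|$ fails near $X=1$, because $\frac16<\gamma$, so the term $\frac1{2X}$ must also be used.

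The missing observation is that your own derivation never needed $X\geq1$. For $0<X<1$ the function $f$ is still $C^2$ on $[X,N]$, so Euler--Maclaurin on $(X,N]$ with $N\to\infty$ gives the same formula for $R_j(X)$. The estimates then go through with $|\log t|$ in place of $\log t$. You use $|f''(t)|\le\big(2|\log t|^j+3j|\log t|^{j-1}+j(j-1)|\log t|^{j-2}\big)/t^3$ and, via the substitution $t=Xe^v$,
\begin{equation*}
\int_X^\infty\frac{|\log t|^r}{t^3}\,dt\le\frac{1}{X^2}\int_0^\infty\big(|\log X|+v\big)^r e^{-2v}\,dv=\frac{1}{2X^2}\sum_{i=0}^r\frac{r!}{(r-i)!\,2^i}|\log X|^{r-i}.
\end{equation*}
This gives the same $P_j$ evaluated at $|\log X|$. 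It is exactly what the paper does, and it needs no information about the Stieltjes constants. In fact, letting $X\to1^-$ in the resulting bound yields $|\gamma_j|\le j!/2^{j+1}$ for $j\geq2$ as a by-product.
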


\begin{proofbold} Suppose that $X\geq 1$. By applying the Euler--Maclaurin summation formula \eqref{EulerMaclaurin} (of order $2$) to the function $\ell_j:t>0\mapsto \frac{\log^jt}{t}$, we obtain
\begin{align}\label{eq:log_mc}
\sum_{m\leq X}\frac{\log^jm}{m}&=\mathds{1}_{\{j=0\}}(j)+\sum_{1<m\leq X}\frac{\log^jm}{m}=\frac{\log^{j+1}X}{j+1}+I_j+R_j(X),
\end{align}
where 
\begin{equation*}
I_j=\frac{7\ \mathds{1}_{\{j=0\}}(j)}{12}-\frac{\mathds{1}_{\{j=1\}}(j)}{12}-\frac{1}{2}\int_1^\infty B_2(\{t\})\ell_j''(t)dt\\
\end{equation*}
and
\begin{equation}\label{R_j_alt}
R_j(X)=-B_1(\{X\})\ell_j(X)+\frac12 B_2(\{X\})\ell_j'(X)+\frac{1}{2}\int_X^\infty B_2(\{t\})\ell_j''(t)\ dt.
\end{equation}
Observe that
\begin{align*}
\ell_0'(t)&=\frac{-1}{t^2},\quad\ell_1'(t)=\frac{1-\log t}{t^2},\quad\ell_j'(t)=\frac{j\log^{j-1}t-\log^jt}{t^2},\qquad j\geq 2,\\
\ell_0''(t)&=\frac{2}{t^3},\quad\ell_1''(t)=\frac{2\log t-3}{t^3},\quad\ell_j''(t)=\frac{\log^{j-2}t\ (2\log^2t-3j\log t+j(j-1))}{t^3},\qquad j\geq 2.
\end{align*}
Thus, since estimates \eqref{BernoulliBounds} show that $B_1$ and $B_2$ are bounded functions on $[0,1]$, we have $R_j(X)=O_j(\frac{\log^jX}{X})$ and the integral defining $I_j$ converges. In fact, by \eqref{gamma:limit}, we have $I_j=\gamma_j$, where $\gamma_j$ is the $j$-th Stieltjes constant. 

Now, by using \eqref{BernoulliBounds} again in \eqref{R_j_alt}, we obtain
\begin{equation}\label{eq:Rj-basic}
|R_j(X)|
\leq\frac{\log^jX}{2X}
+\frac{|\ell_j'(X)|}{12}
+\frac1{12}\int_X^\infty|\ell_j''(t)|\ dt.
\end{equation}
For $j=0$, the above reads as
\begin{equation*}
|R_0(X)|
\leq\frac{1}{2X}+\frac{1}{12X^2}
+\frac{1}{12}\int_X^\infty\frac{2}{t^3}\ dt
=\frac{1}{2X}+\frac{1}{6X^2},
\end{equation*}
which gives the claimed bound with $P_0$. Suppose now that $j\geq 1$; then we conclude from \eqref{eq:Rj-basic}
\begin{align}\label{eq:Rj-integrals}
|R_j(X)|
\leq&
\frac{\log^jX}{2X}
+\frac{\log^jX+j\log^{j-1}X}{12X^2}\\
&+\frac1{12}\bigg(
2\int_X^\infty\frac{\log^jt}{t^3}\ dt+3j\int_X^\infty\frac{\log^{j-1}t}{t^3}\ dt+j(j-1)\int_X^\infty\frac{\log^{j-2}t}{t^3}\ dt
\bigg).\nonumber
\end{align}
Note that the above last expression does not vanish only when $j\geq 2$. Moreover, integration by parts yields
\begin{equation*}
\int_X^\infty\frac{\log^0t}{t^3}\ dt=\frac{1}{2X^2},
\qquad
\int_X^\infty\frac{\log^rt}{t^3}\ dt=\frac{\log^rX}{2X^2}+\frac{r}{2}\int_X^\infty\frac{\log^{r-1}t}{t^3}\ dt,
\qquad r\geq 1.
\end{equation*}
Hence, by induction, for any $r\in\mathbb{Z}_{\geq 0}$,
\begin{equation}\label{eq:Jr}
\int_X^\infty\frac{\log^rt}{t^3}\ dt
=\frac{1}{2X^2}\sum_{i=0}^r
\frac{r!\ \log^{r-i}X}{(r-i)!\ 2^{i}}.
\end{equation}

Substituting \eqref{eq:Jr} into \eqref{eq:Rj-integrals},
the coefficient of $\frac{\log^jX}{X^2}$ is $
\frac{1}{12}+\frac{1}{12}=\frac{1}{6}$,
and that of $\frac{\log^{j-1}X}{X^2}$ is $\frac{j}{12}+\frac{j}{24}+\frac{j}{8}=\frac{j}{4}$.
For $2\leq i\leq j$, the coefficient of $\frac{\log^{j-i}X}{X^2}$ is
\begin{equation*}
\frac{1}{12}\bigg(
\frac{j!}{(j-i)!\ 2^{i}}+\frac{3j!}{(j-i)!\ 2^{i}}+\frac{j!}{(j-i)!\ 2^{i-1}}\bigg)
=\frac{j!}{2(j-i)!\ 2^{i}}.
\end{equation*}
Therefore
\begin{equation*}
|R_j(X)|
\leq\frac{\log^jX}{2X}+\frac{P_j(\log X)}{X^2},
\end{equation*}
where $P_j$ is defined by \eqref{Pol}.

Finally, suppose that $0<X<1$. The Euler--Maclaurin remainder identity \eqref{eq:log_mc}
remains valid for every $X>0$, with the leftmost sum
being empty. For every integer $r\geq 0$, the
substitution $t=Xe^v$ gives
\begin{align*}
\int_X^\infty\frac{|\log t|^r}{t^3}\ dt
=\frac{1}{X^2}\int_0^\infty
|\log X+v|^r e^{-2v}\ dv&\leq\frac{1}{X^2}\int_0^\infty
\big(|\log X|+v\big)^r e^{-2v}\ dv\\
&=\frac{1}{2X^2}\sum_{i=0}^{r}
\frac{r!}{(r-i)!2^i}|\log X|^{r-i}.
\end{align*}
Hence, applying the above estimate to the derivatives of
$\ell_j(t)=\log^jt/t$ in \eqref{eq:Rj-basic} via \eqref{eq:Rj-integrals}, we obtain the same
polynomial $P_j$, first for $j=0$ and then evaluated at
$|\log X|$ for $j\geq 1$. 
\end{proofbold}

%%%%%%%%%%%%%%%%%%%%%%%%%%%%%%%%%%%%%%%%%%%%%%%%%%%%%%%%%%%%%%
\section{A recurrence relationship}\label{recurrencerelantionship}
%%%%%%%%%%%%%%%%%%%%%%%%%%%%%%%%%%%%%%%%%%%%%%%%%%%%%%%%%%%%%%

We first derive estimates for the functions $\mathbf{\check m}^{(k)}(X)$, $k\in\mathbb{Z}_{\geq0}$.
We will then extend these estimates to $\mathbf{\check m}_q^{(k)}(X)$ in \S\ref{coprimality}
and to $\mathbf{\tilde m}_q^{(k)}(X)$ in \S\ref{tilde}.

Consider the family $\mathpzc F$ of functions
$(0,\infty)\to\mathbb C$ that vanish on $(0,1)$.
For example, the function
$\log^+:X>0\mapsto\max\{\log X,0\}$
belongs to this family. Following \cite[\S 2]{BA12}, define two linear maps
$\mathpzc F\to\mathpzc F$ as
\begin{equation}
S_1(f):X>0\mapsto\sum_{n=1}^{\infty}
f\bigg(\frac{X}{n}\bigg),
\qquad
S_\mu(f):X>0\mapsto\sum_{n=1}^{\infty}
\mu(n)f\bigg(\frac{X}{n}\bigg)
\end{equation}
which are well defined finite sums, since only $n\leq X$ contribute.
M\"obius inversion gives the following result.

\begin{lemma}\label{mobinversion}
The maps $S_1$ and $S_\mu$ are inverse to each other:
for every $f\in\mathpzc F$,
\begin{equation}
f=S_\mu(S_1(f))=S_1(S_\mu(f)).
\end{equation}
\end{lemma}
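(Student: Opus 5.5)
The plan is to verify $S_\mu(S_1(f))=f$ directly by expanding the double sum, and then to do the same for the reverse composition. Fix $f\in\mathpzc F$ and $X>0$. By definition,
\begin{equation*}
S_\mu(S_1(f))(X)=\sum_{n\geq 1}\mu(n)\sum_{m\geq 1}f\bigg(\frac{X}{nm}\bigg).
\end{equation*}
Since $f$ vanishes on $(0,1)$, only the pairs with $nm\leq X$ contribute. The double sum is therefore a finite sum, and we may reorder it freely.

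Grouping the terms according to $N=nm$, we obtain
\begin{equation*}
\sum_{N\leq X}f\bigg(\frac{X}{N}\bigg)\sum_{n\mid N}\mu(n).
\end{equation*}
By the classical identity $\sum_{n\mid N}\mu(n)=\mathds{1}_{\{N=1\}}(N)$, this sum equals $f(X)$. When $X<1$ there are no terms at all, and the result is $0=f(X)$, consistent with $f\in\mathpzc F$. The reverse composition $S_1(S_\mu(f))(X)=\sum_{m}\sum_{n}\mu(n)f(X/(mn))$ is handled by the identical regrouping.

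We also need that $S_1$ and $S_\mu$ map $\mathpzc F$ to itself. This is immediate: if $X<1$, then $X/n<1$ for every $n\geq 1$, so every term vanishes.

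There is no real obstacle here. The only point requiring care is the finiteness of the sums, which justifies the rearrangement, and this follows from $f$ vanishing on $(0,1)$.
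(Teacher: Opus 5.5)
Your proposal is correct and is essentially the paper's approach: the paper gives no separate proof and simply attributes the lemma to Möbius inversion, following Balazard. Your regrouping by $N=nm$, with $\sum_{n\mid N}\mu(n)=\mathds{1}_{\{N=1\}}(N)$ and finiteness of the sums coming from $f$ vanishing on $(0,1)$, is exactly that argument written out.
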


Now, let $k\in\mathbb Z_{\geq 2}$. Apply Lemma
\ref{mobinversion} to
$f_k=\mathrm{Id}\cdot\log^{k-2}\cdot\log^+$,
where $\mathrm{Id}$ is the identity function and
$\log^0=\mathbf 1$. Thus, for every $X>0$, we obtain
\begin{equation*}
X\log^{k-2}X\log^+X
=\sum_{n\leq X}\mu(n)
\sum_{m\leq\frac{X}{n}}\frac{X}{mn}
\log^{k-2}\bigg(\frac{X}{mn}\bigg)
\log^+\bigg(\frac{X}{mn}\bigg).
\end{equation*}
Dividing by $X>0$ and applying the binomial theorem gives
\begin{equation}\label{id2}
\log^{k-2}X\log^+X
=\sum_{n\leq X}\frac{\mu(n)}{n}
\sum_{j=0}^{k-1}(-1)^j\mbinom{k-1}{j}
\log^{k-1-j}\bigg(\frac{X}{n}\bigg)
\sum_{m\leq\frac{X}{n}}\frac{\log^jm}{m}.
\end{equation}

Recall the definition \eqref{R_j} of $R_j$.
For every integer $r\geq 0$, define
\begin{equation}\label{S_r}
S_r:t\geq 1\mapsto
\sum_{j=0}^{r}(-1)^j\mbinom{r}{j}
\log^{r-j}t\ R_j(t).
\end{equation}
Applying Lemma \ref{EM} to the innermost sum in
\eqref{id2}, and using
\begin{equation*}
\sum_{j=0}^{k-1}\mbinom{k-1}{j}\frac{(-1)^j}{j+1}
=\int_0^1\sum_{j=0}^{k-1}\mbinom{k-1}{j}(-t)^j\ dt
=\int_0^1(1-t)^{k-1}\ dt
=\frac{1}{k},
\end{equation*}
we obtain
\begin{align*}
\log^{k-2}X\log^+X
&=\sum_{n\leq X}\frac{\mu(n)}{n}
\sum_{j=0}^{k-1}(-1)^j\mbinom{k-1}{j}
\log^{k-1-j}\bigg(\frac{X}{n}\bigg)
\bigg(
\frac{\log^{j+1}(X/n)}{j+1}
+\gamma_j+R_j\bigg(\frac{X}{n}\bigg)
\bigg)\nonumber\\
&=\frac{\mathbf{\check m}^{(k)}(X)}{k}
+\sum_{j=0}^{k-2}(-1)^j\mbinom{k-1}{j}
\gamma_j\ \mathbf{\check m}^{(k-1-j)}(X)
-\frac{\Xi_k(X)}{k},
\end{align*}
where
\begin{equation}\label{Xi}
\Xi_k(X)
=(-1)^k k\gamma_{k-1}m(X)
-k\sum_{n\leq X}\frac{\mu(n)}{n}
S_{k-1}\bigg(\frac{X}{n}\bigg).
\end{equation}
We therefore obtain the following result.

\begin{theorem}\label{recurrence}
Let $k\in\mathbb Z_{\geq 2}$. For every $X>0$,
\begin{equation*}
\mathbf{\check m}^{(k)}(X)
=k\log^{k-2}X\log^+X
-k\sum_{j=1}^{k-1}\mbinom{k-1}{j}
(-1)^{k-1-j}\gamma_{k-1-j}
\mathbf{\check m}^{(j)}(X)
+\Xi_k(X),
\end{equation*}
where $\Xi_k(X)$ is defined in \eqref{Xi}. 

Moreover,
\begin{align*}
|\Xi_k(X)|
&\leq k(k-1)\int_1^{\max\{1,X\}}
\bigg|m\bigg(\frac{X}{t}\bigg)\bigg|W_{k-2}(t)\ dt,
\end{align*}
where, for $r\geq 0$,
\begin{equation}\label{W_r}
W_r:t\geq 1\mapsto
\frac{2^{r-1}\log^r t}{t^2}
+\frac{1}{t^3}\sum_{j=0}^{r}
\mbinom{r}{j}\log^{r-j}t\ P_j(\log t),
\end{equation}
and the polynomials $P_j$ are defined by \eqref{Pol} 
\end{theorem}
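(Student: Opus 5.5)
The identity in Theorem \ref{recurrence} is essentially the computation carried out just before the statement. I would multiply that display by $k$, solve for $\mathbf{\check m}^{(k)}(X)$, and reindex the sum by $j\mapsto k-1-j$. Using $\binom{k-1}{k-1-j}=\binom{k-1}{j}$, the range $0\le j\le k-2$ becomes $1\le j\le k-1$ with coefficient $(-1)^{k-1-j}\gamma_{k-1-j}\mathbf{\check m}^{(j)}(X)$, which is the stated recurrence. For $0<X<1$ every sum is empty and $\log^+X=0$. In that case both sides and $\Xi_k(X)$ vanish, and the claimed integral over $[1,1]$ is zero, so I may assume $X\ge 1$.

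For the bound, the plan is to rewrite $\Xi_k(X)$ as an integral against $m(X/t)$ by a continuous summation by parts. Put $g=S_{k-1}$ on $[1,\infty)$. I would first check that $g$ is continuous and piecewise $C^1$, with derivative
\begin{equation*}
S_r'(t)=\frac{r}{t}S_{r-1}(t),\qquad r\ge 1.
\end{equation*}
To see this, unfold the definition of $R_j$ and use the binomial theorem to write
\begin{equation*}
S_r(t)=\sum_{m\le t}\frac{\log^r(t/m)}{m}-\frac{\log^{r+1}t}{r+1}-\sum_{j=0}^{r}(-1)^j\mbinom{r}{j}\gamma_j\log^{r-j}t,
\end{equation*}
where the middle term comes from the identity $\sum_j\binom{r}{j}(-1)^j/(j+1)=1/(r+1)$ already used above. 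For $r\ge1$ the first sum is continuous in $t$, because the new terms at $t=m$ enter with weight $\log^r 1=0$. Its derivative is $\frac{r}{t}\sum_{m\le t}\log^{r-1}(t/m)/m$. For the other two terms, the relations $\binom{r}{j}(r-j)=r\binom{r-1}{j}$ and $(r+1)\cdot\frac{1}{r+1}=r\cdot\frac1r$ show that they differentiate to $\frac rt$ times the corresponding terms of $S_{r-1}$.

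Next, writing $g(X/n)=g(1)+\int_1^{X/n}g'(t)\,dt$ and interchanging the finite sum with the integral gives
\begin{equation*}
\sum_{n\le X}\frac{\mu(n)}{n}g\Big(\frac Xn\Big)=g(1)\,m(X)+\int_1^X m\Big(\frac Xt\Big)g'(t)\,dt .
\end{equation*}
The boundary value is computed as follows. Since $\log 1=0$, we have $S_{k-1}(1)=(-1)^{k-1}R_{k-1}(1)$. Since $k-1\ge1$, $R_{k-1}(1)=0-0-\gamma_{k-1}$, so $S_{k-1}(1)=(-1)^k\gamma_{k-1}$. The term $-k\,g(1)m(X)$ therefore cancels exactly the term $(-1)^k k\gamma_{k-1}m(X)$ in \eqref{Xi}. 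This leaves
\begin{equation*}
\Xi_k(X)=-k(k-1)\int_1^X m\Big(\frac Xt\Big)\frac{S_{k-2}(t)}{t}\,dt .
\end{equation*}

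It then remains to show $|S_r(t)|/t\le W_r(t)$ for $t\ge1$, with $r=k-2$. By the triangle inequality and Lemma \ref{EM},
\begin{equation*}
|S_r(t)|\le\sum_{j=0}^r\mbinom{r}{j}\log^{r-j}t\Big(\frac{\log^jt}{2t}+\frac{P_j(\log t)}{t^2}\Big).
\end{equation*}
The first part sums to $2^r\log^rt/(2t)$, so dividing by $t$ gives exactly $W_r(t)$. The main point needing care is the derivative identity $S_r'=\frac rtS_{r-1}$, in particular the continuity of $S_{k-1}$ at the integers, which holds because $k-1\ge1$. This is what makes the summation by parts valid with no jump terms. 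The case $r=0$ (that is, $k=2$) is handled by the same computation.
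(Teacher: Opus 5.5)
Your proposal is correct and follows the paper's proof essentially step for step. It uses the same unfolded expression for $S_r$, the derivative identity $S_r'=\frac{r}{t}S_{r-1}$ together with continuity at the integers for $r\geq 1$, the boundary cancellation via $S_{k-1}(1)=(-1)^k\gamma_{k-1}$, and the bound $|S_r(t)|\leq tW_r(t)$ from Lemma \ref{EM}. Your version of the summation by parts, expanding $g(X/n)=g(1)+\int_1^{X/n}g'(t)\,dt$ and swapping the finite sum with the integral, is the paper's partial summation against $m(u)$ after the substitution $t=X/u$.
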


\begin{proofbold}
If $0<X<1$, then $\Xi_k(X)=0$ and the integral is zero. We may therefore suppose that $X\geq1$.

By Lemma \ref{EM}, the triangle inequality and the
binomial theorem, for every $r\geq 0$ and $t\geq 1$,
\begin{equation}\label{S_r:bound}
|S_r(t)|
\leq tW_r(t).
\end{equation}
Moreover, the definition of $R_j$ gives another expression for $S_r$ defined by \eqref{S_r}, namely
\begin{equation*}
S_r(t)
=\sum_{m\leq t}\frac{\log^r(t/m)}{m}
-\frac{\log^{r+1}t}{r+1}
-\sum_{j=0}^{r}(-1)^j\mbinom{r}{j}
\gamma_j\log^{r-j}t.
\end{equation*}
Further, for $r\geq 1$, the function $S_r$ is continuous
and continuously differentiable between consecutive
integers, with
\begin{equation*}
S_r(1)=(-1)^{r+1}\gamma_r,
\qquad
S_r'(t)=\frac{r}{t}S_{r-1}(t),
\qquad t\geq 1,\quad t\notin\mathbb Z.
\end{equation*}
Consequently, since $k\geq 2$, partial summation applies to
$S_{k-1}$ and yields
\begin{align*}
\Xi_k(X)
&=(-1)^k k\gamma_{k-1}m(X)
-km(X)S_{k-1}(1)+k\int_1^X m(u)
\frac{\mathrm{d}}{\mathrm{d}u}
\bigg[S_{k-1}\bigg(\frac{X}{u}\bigg)\bigg]\ du\\
&\qquad\quad=-k(k-1)\int_1^X m(u)
S_{k-2}\bigg(\frac{X}{u}\bigg)\frac{du}{u}\ =-k(k-1)\int_1^X
m\bigg(\frac{X}{t}\bigg)S_{k-2}(t)\frac{dt}{t}.
\end{align*}
Taking absolute values and applying \eqref{S_r:bound}
with $r=k-2$ proves the result.
\end{proofbold}

%%%%%%%%%%%%%%%%%%%%%%%%%%%%%%%%%%%%%%%%%%%%%%%%%%%%%%%%%%%%%%
\section{Initial estimates}\label{bounds} 
%%%%%%%%%%%%%%%%%%%%%%%%%%%%%%%%%%%%%%%%%%%%%%%%%%%%%%%%%%%%%%

In order to apply the recurrence result given by Theorem \ref{recurrence}, we need the following initial estimates. 
\begin{lemma}\label{initial_estimates}
For every $X>0$, we have
\begin{align}
|m(X)|\qquad\quad
&\leq\frac{\sqrt{2}}{\sqrt{X}}
+\frac{1.862\ \mathds{1}_{{X\geq 10^{12}}}(X)}
{10^4\log X},\label{mlog^2}\\
|\check m(X)-1|\qquad
&\leq\frac{1}{\sqrt{X}}
+\frac{8.55\ 
\mathds{1}_{{X\geq 10^{12}}}(X)}
{10^6\log X},\label{checklog^2}\\
|\check{\check m}(X)-2\log X+2\gamma|
&\leq\frac{4e^{\frac{\gamma}{2}-1}}{\sqrt{X}}
+\frac{2.989\ \mathds{1}_{{X\geq 10^{12}}}(X)}
{10^4\log X}.\label{checkcheckmlog^2}
\end{align}
\end{lemma}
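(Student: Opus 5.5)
The plan is to assemble Lemma \ref{initial_estimates} from explicit estimates already available in the literature, splitting the range of $X$ at $10^{12}$.

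\textbf{Small range, $X<10^{12}$.} Here only the $1/\sqrt{X}$ terms appear, so it suffices to have bounds of the form $|m(X)|\leq\sqrt{2/X}$, $|\check m(X)-1|\leq 1/\sqrt X$ and $|\check{\check m}(X)-2\log X+2\gamma|\leq 4e^{\gamma/2-1}/\sqrt X$ for $1\leq X\leq 10^{12}$. For $0<X<1$ all three sums are empty. In that range we must check $0\leq\sqrt{2/X}$, $1\leq 1/\sqrt X$ and $|2\log X-2\gamma|\leq 4e^{\gamma/2-1}/\sqrt X$. The first two are immediate. The third reduces to showing that $g(X)=2\sqrt X(\gamma-\log X)$ satisfies $g\leq 4e^{\gamma/2-1}$. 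Maximising $g$ at $\log X=\gamma-2$ gives exactly $4e^{\gamma/2-1}$, which explains the shape of that constant.

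For $1\leq X\leq 10^{12}$ I would quote the explicit bounds of Ramaré (and Helfgott's book, \S5--\S6) that were obtained by direct computation. The estimate $|m(X)|\leq\sqrt{2/X}$ on this range is a known computed result (Ramaré, \emph{Explicit estimates on the summatory functions of the Möbius function with coprimality restrictions}), and there are analogous computations for $\check m$ and $\check{\check m}$. If a bound for $\check{\check m}$ is missing, I would derive it from the identities $\check m(X)=\int_1^X m(t)\,dt/t$ and $\check{\check m}(X)=2\int_1^X \check m(t)\,dt/t$. Integrating the bound for $\check m-1$ gives $\check{\check m}(X)-2\log X=2\int_1^X(\check m(t)-1)\,dt/t$. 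Since $\check{\check m}(X)-2\log X+2\gamma\to 0$, this equals $-2\int_X^\infty(\check m(t)-1)\,dt/t$. Such a tail expression can, however, only be bounded using the large-$X$ information, so in practice I would cite the computation directly.

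\textbf{Large range, $X\geq10^{12}$.} Here I would use explicit bounds of the shape $|m(X)|\leq c/\log X$, as in Ramaré's $|m(X)|\leq 0.0144/\log X$ or sharper versions valid beyond $10^{12}$. The constant $1.862\cdot10^{-4}$ presumably comes from combining such a result with the term $\sqrt{2/X}$ already present. For $\check m$ and $\check{\check m}$, I would write
\begin{equation*}
\check m(X)-1=-\int_X^\infty m(t)\frac{dt}{t}
\end{equation*}
up to exact terms; more precisely this uses Lemma \ref{mobinversion}-type identities relating $\check m$ to $m$ and the Möbius sum $\sum\mu(n)/n=0$. I would then insert a bound on $m(t)$ that decays faster than $1/\log t$, such as the $1/\log^2 t$ type bounds of Ramaré for $t\geq 10^{12}$. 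Integrating a bound $c/\log^2t$ against $dt/t$ produces $c/\log X$, giving \eqref{checklog^2}. The same step, applied once more to $\check m-1$ with its improved decay, gives \eqref{checkcheckmlog^2}.

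\textbf{Main obstacle.} The hardest part is the large range: matching the precise numerical constants $8.55\cdot10^{-6}$ and $2.989\cdot10^{-4}$. This requires choosing the right published explicit bounds on $m$ with stronger logarithmic decay and carefully combining them with the $1/\sqrt X$ terms. I would first try to cite Ramaré's and Helfgott's explicit statements for $\check m$ and $\check{\check m}$ directly, and fall back on the integral representations only if a constant needs to be rederived.
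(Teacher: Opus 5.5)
There is a genuine gap: your treatment of $X<10^{12}$ matches the paper, but none of the three large-range constants is actually obtained. For $X<10^{12}$ the paper also just cites Helfgott's Lemmas 5.9--5.10, and your computation for $0<X<1$ correctly explains the constant $4e^{\gamma/2-1}$. The step for \eqref{checkcheckmlog^2} with $X\ge 10^{12}$ fails outright. The identity $\check{\check m}(X)-2\log X+2\gamma=-2\int_X^\infty(\check m(t)-1)\,\frac{dt}{t}$ is correct. But your only large-$t$ information on $\check m(t)-1$ is \eqref{checklog^2}, which decays like $1/\log t$, and $\int_X^\infty\frac{dt}{t\log t}$ diverges; there is no ``improved decay'' to exploit. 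The missing idea is the exact identity \eqref{R_0}, obtained from M\"obius inversion (Theorem \ref{recurrence} with $k=2$):
\begin{equation*}
\check{\check m}(X)-2\log X+2\gamma=-2\gamma(\check m(X)-1)-2\int_1^X m\Big(\frac{X}{t}\Big)R_0(t)\,\frac{dt}{t}.
\end{equation*}
Here $\check m-1$ appears only at the point $X$, and $m$ is integrated against the integrable kernel $|R_0(t)|/t\le W_0(t)=\frac{1}{2t^2}+\frac{1}{6t^3}$. Inserting \eqref{mlog^2}, \eqref{checklog^2} and \eqref{inek} gives $C/\sqrt X+D/\log X$ with $C=2\gamma+\frac{20\sqrt2}{9}$. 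The excess $C-4e^{\gamma/2-1}$ is then moved into the logarithmic term using $X\ge10^{12}$, which is exactly where $2.989\cdot10^{-4}$ comes from.

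Your routes to the other two large-range constants do not work either. For \eqref{checklog^2}, integrating $|m(t)|\le c/\log^2t$ against $dt/t$ gives $c/\log X$, so you would need $c\le 8.55\cdot10^{-6}$. That means $|m(t)|\le 1.1\cdot10^{-8}$ already at $t=10^{12}$, whereas the available bound of that shape (Daval's Theorem A) has $c=362.84$. Using $\sqrt{2/t}$ on $[X,10^{14}]$ instead already contributes $2\sqrt2\,(X^{-1/2}-10^{-7})$, about twice the whole allowance $1/\sqrt X+8.55\cdot10^{-6}/\log X$ at $X=10^{12}$. The paper does not derive this bound; it cites Daval's Theorem B.

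For \eqref{mlog^2}, a literature bound of the shape $c/\log X$ such as $0.0144/\log X$ is about $77$ times too weak, and you give no source for the needed constant. In the paper, $1.862\cdot10^{-4}$ comes from balancing two bounds:
\begin{itemize}
\item the Chirre--Helfgott bound $|m(X)|\le\frac{3}{\pi 10^{10}}+\frac{11.39}{\sqrt X}$, handled by a convexity argument on $10^{14}<X\le e^{1.949\cdot10^6}$;
\item Daval's $362.84/\log^2X$ beyond that point.
\end{itemize}
These meet because $\frac{3\cdot1.949\cdot10^6}{\pi 10^{10}}\approx\frac{362.84}{1.949\cdot10^6}\approx1.862\cdot10^{-4}$. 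Helfgott's $\sqrt{2/X}$ covers $X\le10^{14}$, which is needed because the Chirre--Helfgott bound is too weak at $X=10^{12}$.
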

\begin{proofbold}
We have
\begin{equation*}
|m(X)|\leq
\begin{cases}
\displaystyle\ \frac{3}{\pi 10^{10}}+\frac{11.39}{\sqrt{X}},
&\text{if }X\geq 1,
\quad\text{\cite[Cor. 1.3]{ChirreHelfgott*25a}},\\[8pt]\ 
\displaystyle\frac{362.84}{\log^2X},
&\text{if }X\geq 2,
\quad\text{\cite[Thm. A]{Daval25}},\\[8pt]\ 
\displaystyle\frac{\sqrt{2}}{\sqrt{X}},
&\text{if }0<X\leq10^{14},
\quad\text{\cite[Lemma~5.10]{Helfgott19}}.
\end{cases}
\end{equation*}
We readily verify that the function
\begin{equation*}
f:t>0\mapsto\frac{3t}{\pi10^{10}}+\frac{(11.39-\sqrt{2})t}{e^{t/2}}
\end{equation*}
is convex on $[14\log 10,1.949\cdot 10^6]$. Moreover, 
\begin{equation*}
f(14\log 10)<0.00003217\quad\text{and}\quad f(1.949\cdot 10^6)<0.00018612.
\end{equation*}
Thus, if we suppose that $X\in (10^{14},e^{1.949\cdot 10^6}]$, then $f(\log X)<\frac{1.862}{10^4}$,
which gives \eqref{mlog^2} in that interval.
For $X\geq e^{1.949\cdot 10^6}$, we instead use
\begin{equation*}
|m(X)|
\leq\frac{362.84}{\log^2X}
\leq\frac{362.84}{1.949\cdot 10^6\log X}
<\frac{1.862}{10^4\log X}.
\end{equation*}
This proves \eqref{mlog^2} for every $X>0$.

Estimate \eqref{checklog^2} follows from
\cite[Lemma~5.9, Eq.(5.54)]{Helfgott19} for $X\leq 10^{12}$ and
\cite[Thm.~B]{Daval25} for $X\geq 10^{12}$.

For the last estimate, the exact remainder identity
in the proof of Theorem \ref{recurrence}, with $k=2$,
gives, for $X\geq 1$,
\begin{equation}\label{R_0}
\check{\check m}(X)-2\log X+2\gamma
=-2\gamma(\check m(X)-1)-2\int_1^X m\bigg(\frac{X}{t}\bigg)
R_0(t)\frac{dt}{t}.
\end{equation}
Here
\begin{equation*}
\frac{|R_0(t)|}{t}
\leq W_0(t)=\frac{1}{2t^2}+\frac{1}{6t^3},
\end{equation*}
as defined in \eqref{W_r}.
Also, observe that
\begin{equation*}
\int_1^\infty\sqrt{t}\ W_0(t)\ dt=\frac{10}{9},
\qquad
\int_1^\infty W_0(t)\ dt=\frac{7}{12}
\quad\text{and}\quad
\int_1^\infty W_0(t)\log t\ dt=\frac{13}{24}.
\end{equation*}
Applying \eqref{mlog^2} into \eqref{R_0}, and using the inequality
\begin{equation}\label{inek}
\frac{1}{\log(X/t)}
\leq\frac{1}{\log X}
\bigg(1+\frac{\log t}{12\log 10}\bigg),
\qquad1\leq t\leq \frac{X}{10^{12}},
\end{equation}
we obtain
\begin{align*}
&|\check{\check m}(X)-2\log X+2\gamma|-2\gamma|\check m(X)-1|
\leq 2\int_1^X\bigg|m\bigg(\frac{X}{t}\bigg)\bigg|\frac{|R_0(t)|}{t}dt\\
&\qquad\leq \frac{2\sqrt{2}}{\sqrt{X}}\int_1^X\sqrt{t}W_0(t)dt
+2\cdot\frac{1.862}{10^4}\ \mathds{1}_{{X\geq 10^{12}}}(X)\int_1^{X/10^{12}}\frac{W_0(t)}{\log(X/t)}dt\\
&\qquad\qquad\leq \frac{2\sqrt{2}}{\sqrt{X}}\int_1^X\sqrt{t}W_0(t)dt
+2\cdot\frac{1.862}{10^4}\frac{\mathds{1}_{{X\geq 10^{12}}}(X)}{\log X}\int_1^{X/10^{12}}\bigg(1+\frac{\log t}{12\log 10}\bigg)W_0(t)dt.\end{align*}
Therefore, adding $2\gamma|\check m(X)-1|$ to the above estimate and
applying \eqref{checklog^2} give
\begin{equation*}
|\check{\check m}(X)-2\log X+2\gamma|\leq\frac{C}{\sqrt{X}}+\frac{D\ \mathds{1}_{{X\geq 10^{12}}}(X)}{\log X},
\end{equation*}
where
\begin{equation*}
C=2\gamma+\frac{20\sqrt{2}}{9},\qquad D=2\gamma\frac{8.55}{10^6}+\frac{1.862}{10^4}\bigg(\frac{7}{6}+\frac{13}{144\log 10}\bigg).
\end{equation*}
Since $C>4e^{\frac{\gamma}{2}-1}$ and the function $t\geq e^2\mapsto\log t/\sqrt{t}$ is decreasing, we have, for $X\geq 10^{12}$,
\begin{equation*}
\frac{C}{\sqrt{X}}+\frac{D}{\log X}
\leq\frac{4e^{\frac{\gamma}{2}-1}}{\sqrt{X}}
+\bigg(D+(C-4e^{\frac{\gamma}{2}-1})\frac{12\log 10}{\sqrt{10^{12}}}\bigg)\frac{1}{\log X}.
\end{equation*}
Finally,
\begin{equation*}
D+\bigg(C-4e^{\frac{\gamma}{2}-1}\bigg)
\frac{12\log 10}{10^6}
<\frac{2.98876}{10^4}<\frac{2.989}{10^4}.
\end{equation*}
For the remaining case $0<X<10^{12}$, the required bound is already supplied
by \cite[Lemma~5.9, Eq.(5.56)]{Helfgott19}. This proves
\eqref{checkcheckmlog^2}.
\end{proofbold}

%%%%%%%%%%%%%%%%%%%%%%%%%%%%%%%%%%%%%%%%%%%%%%%%%%%%%%%%%%%%%%
\section{Effective estimates}
%%%%%%%%%%%%%%%%%%%%%%%%%%%%%%%%%%%%%%%%%%%%%%%%%%%%%%%%%%%%%%

\subsection{A recursion on polynomials}

Set $Q_0(t)=0$ and, recursively for every integer $k\geq 1$, define
\begin{equation}\label{Pol_2}
Q_k(t)=kt^{k-1}-k\sum_{j=1}^{k-1}\mbinom{k-1}{j}(-1)^{k-1-j}\gamma_{k-1-j}Q_j(t),
\end{equation}
where the sum is empty when $k=1$.
 Moreover, differentiating \eqref{Pol_2} and arguing by induction gives
$Q_k'(t)=kQ_{k-1}(t)$ for every $k\geq 1$. The first eight polynomials are

\begin{align*}
&Q_0(t)=0,\qquad Q_1(t)=\phantom{.}1,\qquad Q_2(t)=\phantom{.}2t-2\gamma,\qquad Q_3(t)=\phantom{.}3t^2-6\gamma t+6(\gamma^2+\gamma_1),\\
 &Q_4(t)=4t^3-12\gamma t^2
+24(\gamma^2+\gamma_1)t-12(2\gamma^3+4\gamma\gamma_1+\gamma_2),\\
 &Q_5(t) =5t^4-20\gamma t^3
+60(\gamma^2+\gamma_1)t^2-60(2\gamma^3+4\gamma\gamma_1+\gamma_2)t\\
&\qquad\qquad+20(6\gamma^4+18\gamma^2\gamma_1
+6\gamma_1^2+6\gamma\gamma_2+\gamma_3),\\
&Q_6(t) =6t^5-30\gamma t^4+120(\gamma^2+\gamma_1)t^3-180(2\gamma^3+4\gamma\gamma_1+\gamma_2)t^2\\
&\qquad\qquad+120(6\gamma^4+18\gamma^2\gamma_1
+6\gamma_1^2+6\gamma\gamma_2+\gamma_3)t\\
&\qquad\qquad\qquad-30\big(24\gamma^5+96\gamma^3\gamma_1
+72\gamma\gamma_1^2+36\gamma^2\gamma_2+24\gamma_1\gamma_2+8\gamma\gamma_3+\gamma_4\big),\\
&Q_7(t) =7t^6-42\gamma t^5
+210(\gamma^2+\gamma_1)t^4-420\big(2\gamma^3+4\gamma\gamma_1+\gamma_2\big)t^3\\
&\qquad\qquad+420\big(6\gamma^4+18\gamma^2\gamma_1
+6\gamma_1^2+6\gamma\gamma_2+\gamma_3\big)t^2\\
&\qquad\qquad\qquad-210\big(24\gamma^5+96\gamma^3\gamma_1+72\gamma\gamma_1^2+36\gamma^2\gamma_2+24\gamma_1\gamma_2+8\gamma\gamma_3+\gamma_4\big)t\\
&\qquad\qquad\qquad\qquad+42\big(120\gamma^6+600\gamma^4\gamma_1
+720\gamma^2\gamma_1^2+120\gamma_1^3+240\gamma^3\gamma_2\big)\\
&\qquad\qquad\qquad\qquad\qquad+42\big(360\gamma\gamma_1\gamma_2+30\gamma_2^2+60\gamma^2\gamma_3+40\gamma_1\gamma_3+10\gamma\gamma_4+\gamma_5\big).
\end{align*}

\subsection{General estimates}

With the help of the recurrence relationship we have described in Theorem \ref{recurrence}, we will construct estimates of the shape \eqref{general_polynomial_bound} for $k\geq 1$. In fact, thanks to Lemma \ref{initial_estimates}, we have bounds for $\mathbf{\check m}^{(0)}(X)$, $\mathbf{\check m}^{(1)}(X)$ and $\mathbf{\check m}^{(2)}(X)$, $X>0$, with respective coefficients
\begin{equation}\label{AB}
(A_0,B_0)=\bigg(\sqrt{2},\frac{1.862}{10^4}\bigg),\qquad
(A_1,B_1)=\bigg(1,\frac{8.55}{10^6}\bigg),\qquad
(A_2,B_2)=\bigg(4e^{\frac{\gamma}{2}-1},\frac{2.989}{10^4}\bigg).
\end{equation}
\begin{theorem}\label{general_polynomial}
Define
\begin{align*}
U_r&=r!\bigg(4^r+\frac{14}{45}\bigg(\frac{4}{3}\bigg)^r-\frac{1}{5\cdot2^r}\bigg),\\
V_r&=r!\bigg(2^{r-1}+\frac{1}{3}-\frac{1}{2^{r+2}}+\bigg(2^{r-1}(r+1)+\frac{2r-1}{12}+\frac{1}{2^{r+3}}\bigg)\frac{1}{12\log 10}\bigg),
\qquad r\geq 1.
\end{align*}
For $k\geq3$, let
\begin{align}
A_k
&=k\sum_{j=1}^{k-1}\mbinom{k-1}{j}
|\gamma_{k-1-j}|A_j
+k(k-1)\sqrt{2}\ U_{k-2},\label{Ak_rec}\\
B_k
&=k\sum_{j=1}^{k-1}\mbinom{k-1}{j}
|\gamma_{k-1-j}|B_j
+\frac{1.862}{10^4}k(k-1)V_{k-2}.
\label{Bk_rec}
\end{align}
Then, for every integer $k\geq 0$ and every $X>0$,
\begin{equation}\label{general_polynomial_bound}
\big|\mathbf{\check m}^{(k)}(X)-Q_k(\log X)\big|
\leq\frac{A_k}{\sqrt{X}}
+\frac{B_k\ \mathds{1}_{X\geq 10^{12}}(X)}{\log X},
\end{equation}
where $Q_0=0$ and, for $k\geq 1$, $Q_k$ is the
polynomial of degree $k-1$ and leading coefficient $k$
defined by \eqref{Pol_2}.
\end{theorem}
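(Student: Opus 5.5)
We argue by strong induction on $k$. For $k=0,1,2$ the bound \eqref{general_polynomial_bound} is exactly Lemma~\ref{initial_estimates} with the constants \eqref{AB}, since $Q_0=0$, $Q_1=1$ and $Q_2(t)=2t-2\gamma$. Now let $k\geq 3$ and assume the bound for all $j<k$. For $0<X<1$ every term vanishes, because $\mathbf{\check m}^{(k)}(X)=0$ and the right side of Theorem~\ref{recurrence} is zero. It also holds that $\log^{k-2}X\log^+X=0$ there, and $Q_k(\log X)$ has to be compared directly. Here I would check that the trivial bound suffices, or else note that the claim for $X<1$ reduces to $|Q_k(\log X)|\leq A_k/\sqrt X$. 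This is plausible since $A_k$ grows like $k!\,4^{k}$, but it may need its own argument, and I would verify it via the recursion \eqref{Pol_2} applied at small $X$.

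For $X\geq 1$, we subtract the polynomial identity \eqref{Pol_2} at $t=\log X$ from the identity of Theorem~\ref{recurrence}. The main terms $k\log^{k-1}X$ cancel, leaving
\begin{equation*}
\mathbf{\check m}^{(k)}(X)-Q_k(\log X)=-k\sum_{j=1}^{k-1}\mbinom{k-1}{j}(-1)^{k-1-j}\gamma_{k-1-j}\big(\mathbf{\check m}^{(j)}(X)-Q_j(\log X)\big)+\Xi_k(X).
\end{equation*}
The induction hypothesis applied to the sum produces exactly the first summands of \eqref{Ak_rec} and \eqref{Bk_rec}. It remains to show
\begin{equation*}
|\Xi_k(X)|\leq \frac{k(k-1)\sqrt2\,U_{k-2}}{\sqrt X}+\frac{1.862}{10^4}\frac{k(k-1)V_{k-2}\mathds 1_{X\geq 10^{12}}(X)}{\log X}.
\end{equation*}

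For this we insert \eqref{mlog^2} into the bound for $\Xi_k$ in Theorem~\ref{recurrence}, exactly as in the proof of \eqref{checkcheckmlog^2}. The $\sqrt2/\sqrt{X/t}$ part gives $\sqrt2X^{-1/2}\int_1^\infty\sqrt t\,W_{k-2}(t)\,dt$. The second part is supported on $t\leq X/10^{12}$, and there inequality \eqref{inek} gives at most $\frac{1.862}{10^4\log X}\int_1^\infty(1+\frac{\log t}{12\log10})W_{k-2}(t)\,dt$. So the proof reduces to the moment inequalities
\begin{equation*}
\int_1^\infty \sqrt t\,W_r(t)\,dt\leq U_r,\qquad \int_1^\infty\Big(1+\frac{\log t}{12\log 10}\Big)W_r(t)\,dt\leq V_r .
\end{equation*}
These are computed with $\int_1^\infty \log^a t\, t^{-b}dt=a!/(b-1)^{a+1}$.

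The first term of $W_r$ contributes $2^{r-1}r!\,(2/3)^{r+1}$ and $2^{r-1}r!$ (respectively $2^{r-1}(r+1)!$ against $\log t$), which matches the $2^{r-1}$ and $2^{r-1}(r+1)$ pieces of $V_r$. The second term is $t^{-3}\sum_j\binom rj\log^{r-j}t\,P_j(\log t)$. Expanding $P_j$ from \eqref{Pol} gives a triple sum of terms $c\,\log^{a}t/t^3$, whose integrals are $a!/2^{a+1}$, or $a!/(5/2)^{a+1}$ against $\sqrt t$. Resumming the binomial and factorial structure, for instance by writing $P_j(u)$ through the generating integral $\int_0^\infty(u+v)^je^{-2v}dv$ as in the proof of Lemma~\ref{EM}, should yield the closed forms $4^r$, $\frac{14}{45}(4/3)^r$, $-\frac1{5\cdot2^r}$ in $U_r$ and $\frac13-2^{-r-2}$, $\frac{2r-1}{12}+2^{-r-3}$ in $V_r$. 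This bookkeeping, getting the exact constants rather than just upper bounds, is the main obstacle. Where exact resummation fails, I would settle for a termwise upper bound: replace $j!/(j-i)!$ sums by geometric series, which is enough because only inequalities are needed. The case $r=0$ is the computation already done in Lemma~\ref{initial_estimates}, and it serves as a check on the formulas.
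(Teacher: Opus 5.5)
Your architecture is the paper's: base cases from Lemma~\ref{initial_estimates}, subtracting \eqref{Pol_2} from Theorem~\ref{recurrence}, then using \eqref{mlog^2} and \eqref{inek} to reduce $\Xi_k$ to moments of $W_{k-2}$. Two steps, however, do not go through as written.

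First, your $\sqrt t$-weighted integrals are wrong. Since $\sqrt t\cdot t^{-2}=t^{-3/2}$, the first term of $W_r$ contributes
\[
2^{r-1}\int_1^\infty \log^r t\,t^{-3/2}\,dt=2^{r-1}\,r!\,2^{r+1}=r!\,4^r,
\]
not $2^{r-1}r!\,(2/3)^{r+1}$. Likewise $\int_1^\infty \log^a t\,t^{-5/2}\,dt=a!\,(2/3)^{a+1}$, not $a!\,(2/5)^{a+1}$. Hence the $4^r$ in $U_r$ comes from the first term of $W_r$. It cannot come from the $P_j$-part, which is only of order $r!\,(4/3)^r$. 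With your values the bookkeeping cannot match $U_r$. Your fallback ``termwise upper bound'' would then give a number below $\int_1^\infty\sqrt t\,W_r(t)\,dt$, i.e.\ a false inequality. Your own $r=0$ check exposes this: the correct value is $1+\frac19=\frac{10}{9}$, whereas your values give $\frac13+\frac1{15}$.

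Once the exponents are fixed, your generating-integral idea does settle the bookkeeping. For all $j\ge 0$ one has $P_j(u)=\int_0^\infty(u+v)^je^{-2v}\,dv-\frac13u^j$, hence
\[
\sum_{j=0}^{r}\mbinom{r}{j}u^{r-j}P_j(u)=\int_0^\infty(2u+v)^re^{-2v}\,dv-\frac{2^r}{3}u^r=\frac{2^r}{6}u^r+\sum_{i=1}^{r}\frac{r!}{(r-i)!}2^{r-2i-1}u^{r-i}.
\]
This is the closed form the paper obtains from $\sum_{j\ge i}\binom rj\frac{j!}{(j-i)!}=\frac{r!}{(r-i)!}2^{r-i}$, and it is slightly slicker. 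Integrating termwise then gives $U_r$ and $V_r$ with equality.

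Second, you have not proved the case $0<X<1$. The growth of $A_k$ alone does not settle it, because the lower-order coefficients of $Q_k$, built from the $\gamma_j$, must also be controlled. The missing observation is that $\mathbf{\check m}^{(j)}(X)=0$ for $0<X<1$. So the induction hypothesis at that same $X$ says $|Q_j(\log X)|\le A_j/\sqrt X$ for $1\le j\le k-1$. Inserting this into \eqref{Pol_2} gives
\[
|Q_k(\log X)|\le k|\log X|^{k-1}+\frac{k}{\sqrt X}\sum_{j=1}^{k-1}\mbinom{k-1}{j}|\gamma_{k-1-j}|A_j .
\]
The leftover term $k|\log X|^{k-1}$ is absorbed by the $\Xi$-part of $A_k$. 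On $(0,1)$ one has $\sqrt X|\log X|^{k-1}\le(2(k-1)/e)^{k-1}$. Moreover, for $k\ge3$,
\[
k\big(2(k-1)/e\big)^{k-1}\le k!\sqrt2\,4^{k-2}\le k(k-1)\sqrt2\,U_{k-2},
\]
using $(k-1)!\ge((k-1)/e)^{k-1}$ and $U_{k-2}\ge(k-2)!\,4^{k-2}$. This is how the paper closes that range.
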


\begin{proofbold}
The cases $k=0,1,2$ follow from Lemma
\ref{initial_estimates} and \eqref{AB}, since
$Q_0(t)=0$, $Q_1(t)=1$ and $Q_2(t)=2t-2\gamma$.

For $k\geq 3$ we use induction: suppose that the result holds for all positive integers $1\leq j\leq k-1$ and all $X>0$.

Note now the three following binomial identities
\begin{equation*}
\sum_{j=0}^{r}\mbinom{r}{j}=2^r,\quad j\mbinom{r}{j}=r\mbinom{r-1}{j-1},\quad
\sum_{j=i}^{r}\mbinom{r}{j}\frac{j!}{(j-i)!}
=\frac{r!}{(r-i)!}
\sum_{j=i}^{r}\mbinom{r-i}{j-i}
=\frac{r!}{(r-i)!}2^{r-i}.
\end{equation*}
Consequently, the definition \eqref{Pol} of the polynomials $P_j$ yields
\begin{align*}
\sum_{j=0}^{r}\mbinom{r}{j}t^{r-j}P_j(t)
&=\frac{2^r}{6}t^r+\frac{r2^{r-1}}{4}t^{r-1}+\frac{1}{2}\sum_{i=2}^{r}\frac{t^{r-i}}{2^i}
\sum_{j=i}^{r}\mbinom{r}{j}\frac{j!}{(j-i)!}\\
&=\frac{2^r}{6}t^r
+\sum_{i=1}^{r}\frac{r!}{(r-i)!}
2^{r-2i-1}t^{r-i},\qquad r\geq 1.
\end{align*}
Recall the definition \eqref{W_r} of $W_r$. The preceding calculation gives, for $r\geq1$,
\begin{equation*}
W_r(t)=\frac{2^{r-1}\log^r t}{t^2}+\frac{2^r\log^r t}{6t^3}
+\sum_{i=1}^r\frac{r!}{(r-i)!}\,2^{r-2i-1}\frac{\log^{r-i}t}{t^3}.
\end{equation*}
We integrate each term using $
\int_1^\infty\frac{\log^\ell t}{t^{1+\alpha}}\,dt
=\frac{\ell!}{\alpha^{\ell+1}}$, $\ell\in\mathbb Z_{\geq 0}$, $\alpha>0$. This gives
\begin{align*}
\int_1^\infty\sqrt t\,W_r(t)\,dt
&=r!\bigg(4^r+\frac19\bigg(\frac43\bigg)^r+\frac13\bigg(\frac43\bigg)^r\sum_{i=1}^r\bigg(\frac38\bigg)^i\bigg)=r!\bigg(4^r+\frac{14}{45}\bigg(\frac43\bigg)^r-\frac1{5\cdot2^r}\bigg)=U_r,\\
\int_1^\infty W_r(t)\,dt
&=r!\bigg(2^{r-1}+\frac1{12}+\frac14\sum_{i=1}^r\frac1{2^i}\bigg)=r!\bigg(2^{r-1}+\frac13-\frac1{2^{r+2}}\bigg),\\
\int_1^\infty W_r(t)\log t\,dt
&=r!\bigg(2^{r-1}(r+1)+\frac{r+1}{24}+\frac18\sum_{i=1}^r\frac{r-i+1}{2^i}\bigg)\\
&\qquad=r!\bigg(2^{r-1}(r+1)+\frac{2r-1}{12}+\frac1{2^{r+3}}\bigg).
\end{align*}
Consequently,
\begin{equation*}
\int_1^\infty W_r(t)\bigg(1+\frac{\log t}{12\log10}\bigg)\,dt=V_r.
\end{equation*}
Suppose first that $X\geq 1$. By estimate \eqref{mlog^2},
\begin{align*}
\int_1^X
\bigg|m\bigg(\frac{X}{t}\bigg)\bigg|W_r(t)\ dt
\leq\phantom{.}&
\frac{\sqrt{2}}{\sqrt{X}}
\int_1^X\sqrt{t}\ W_r(t)\ dt+\frac{1.862\cdot\mathds{1}_{X\geq 10^{12}}}{10^4}
\int_1^{X/10^{12}}\frac{W_r(t)}{\log(X/t)}\ dt.
\end{align*}
By recalling inequality \eqref{inek}, we then have
\begin{equation*}
\int_1^X
\bigg|m\bigg(\frac{X}{t}\bigg)\bigg|W_r(t)\ dt
\leq\frac{\sqrt{2}\ U_r}{\sqrt{X}}
+\frac{1.862\ V_r\ \mathds{1}_{{X\geq 10^{12}}}(X)}
{10^4\log X}.
\end{equation*}
Thus, Theorem \ref{recurrence} with $r=k-2$ gives
\begin{equation*}
|\Xi_k(X)|
\leq\frac{k(k-1)\sqrt{2}\ U_{k-2}}{\sqrt{X}}
+\frac{1.862\ k(k-1)V_{k-2}\ 
\mathds{1}_{{X\geq 10^{12}}}(X)}{10^4\log X}.
\end{equation*}

On the other hand, subtracting \eqref{Pol_2} evaluated at $\log X$
from the recurrence in Theorem \ref{recurrence} gives
\begin{equation*}
\mathbf{\check m}^{(k)}(X)-Q_k(\log X)
=-k\sum_{j=1}^{k-1}\mbinom{k-1}{j}
(-1)^{k-1-j}\gamma_{k-1-j}
\big(\mathbf{\check m}^{(j)}(X)-Q_j(\log X)\big)
+\Xi_k(X).
\end{equation*}
Applying the induction hypothesis and the preceding
bound for $\Xi_k(X)$ proves
\eqref{general_polynomial_bound} for $X\geq 1$,
with the constants \eqref{Ak_rec} and \eqref{Bk_rec}.

Finally, for $0<X<1$, we have $\mathbf{\check m}^{(k)}(X)=0$.
Hence, by \eqref{Pol_2} and the induction hypothesis,
\begin{align*}
\big|\mathbf{\check m}^{(k)}(X)-Q_k(\log X)\big|
&=|Q_k(\log X)|\\
&\leq k|\log X|^{k-1}
+k\sum_{j=1}^{k-1}\mbinom{k-1}{j}
|\gamma_{k-1-j}||Q_j(\log X)|\\
&= k|\log X|^{k-1}
+k\sum_{j=1}^{k-1}\mbinom{k-1}{j}
|\gamma_{k-1-j}||\mathbf{\check m}^{(j)}(X)-Q_j(\log X)|\\
&\leq \frac{1}{\sqrt{X}}\bigg(k\bigg(\frac{2(k-1)}{e}\bigg)^{k-1}
+k\sum_{j=1}^{k-1}\mbinom{k-1}{j}
|\gamma_{k-1-j}|A_j\bigg),
\end{align*}
where we have used that the function
$t\mapsto\log^{k-1}t/\sqrt{t}$ is increasing on
$[1,e^{2(k-1)}]$ and decreasing on $[e^{2(k-1)},\infty)$, so that
\begin{equation*}
\sqrt{X}\,|\log X|^{k-1}
\leq\bigg(\frac{2(k-1)}{e}\bigg)^{k-1},
\qquad 0<X<1.
\end{equation*}
Since $U_{k-2}\geq 4^{k-2}(k-2)!$, $k\geq3$, and $k!\geq (k/e)^k$, $k\geq 1$, we have
\begin{equation*}
k(k-1)\sqrt{2}\ U_{k-2}
\geq k!\sqrt{2}\ 4^{k-2}
\geq k\bigg(\frac{2(k-1)}{e}\bigg)^{k-1},
\end{equation*}
and thus, for $0<X<1$, we complete the proof observing that
\begin{align*}
\big|\mathbf{\check m}^{(k)}(X)-Q_k(\log X)\big|
&\leq \frac{1}{\sqrt{X}}\bigg(k(k-1)\sqrt{2}\ U_{k-2}+k\sum_{j=1}^{k-1}\mbinom{k-1}{j}
|\gamma_{k-1-j}|A_j\bigg)\\
&=\frac{A_k}{\sqrt{X}}=\frac{A_k}{\sqrt{X}}+\frac{B_k\ \mathds{1}_{X\geq 10^{12}}(X)}{\log X}.\qedhere
\end{align*}
\end{proofbold}

Evaluating \eqref{Ak_rec} and \eqref{Bk_rec} for
$3\leq k\leq7$ gives the following upper bounds.

\begin{corollary}
Let $Q_k$ be defined by \eqref{Pol_2}.
For every $X>0$, we have
\begin{align*}
\big|\mathbf{\check m}^{(3)}(X)-Q_3(\log X)\big|
&\leq\frac{40.4500}{\sqrt{X}}
+\frac{1.9583\ \mathds{1}_{{X\geq 10^{12}}}(X)}
{10^3\log X},\\
\big|\mathbf{\check m}^{(4)}(X)-Q_4(\log X)\big|
&\leq\frac{655.3591}{\sqrt{X}}
+\frac{1.5948\ \mathds{1}_{{X\geq 10^{12}}}(X)}
{10^2\log X},\\
\big|\mathbf{\check m}^{(5)}(X)-Q_5(\log X)\big|
&\leq\frac{12933.0043}{\sqrt{X}}
+\frac{1.5838\ \mathds{1}_{{X\geq 10^{12}}}(X)}
{10\log X},\\
\big|\mathbf{\check m}^{(6)}(X)-Q_6(\log X)\big|
&\leq\frac{307902.5514}{\sqrt{X}}
+\frac{1.8966\ \mathds{1}_{{X\geq 10^{12}}}(X)}
{\log X},\\
\big|\mathbf{\check m}^{(7)}(X)-Q_7(\log X)\big|
&\leq\frac{8592314.3066}{\sqrt{X}}
+\frac{26.7712\ \mathds{1}_{{X\geq 10^{12}}}(X)}
{\log X}.
\end{align*}
\end{corollary}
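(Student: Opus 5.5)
The corollary is a direct numerical specialisation of Theorem~\ref{general_polynomial}. I will evaluate the recursive constants \eqref{Ak_rec} and \eqref{Bk_rec} for $k=3,\dots,7$, starting from the initial values \eqref{AB}, and then round each result upwards. No new analytic input is needed. By \eqref{general_polynomial_bound}, any upper bounds for $A_k$ and $B_k$ give valid bounds, so upward rounding preserves correctness.

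The computation runs in the following order.

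First, I compute the auxiliary quantities $U_r$ and $V_r$ for $r=1,\dots,5$ from their closed forms in Theorem~\ref{general_polynomial}. Only $r=k-2\geq1$ is used.

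Second, for each $k$, I form
\[
k\sum_{j=1}^{k-1}\mbinom{k-1}{j}|\gamma_{k-1-j}|A_j .
\]
This uses the previously obtained $A_1,\dots,A_{k-1}$ and the tabulated Stieltjes constants $|\gamma_0|,\dots,|\gamma_5|$; these suffice, since for $k\le7$ the index $k-1-j$ is at most $5$. I then add $k(k-1)\sqrt2\,U_{k-2}$ to obtain $A_k$.

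Third, I do the same for $B_k$, adding $\frac{1.862}{10^4}k(k-1)V_{k-2}$ in place of the last term.

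As a sanity check for $k=3$: $U_1=4+\frac{56}{135}-\frac1{10}\approx4.3148$, so $6\sqrt2\,U_1\approx36.61$. Adding
\[
3\big(2|\gamma_1|A_1+\gamma_0A_2\big)\approx3(0.1456+0.5772\cdot1.9606)\approx3.83
\]
gives about $40.45$, which matches the stated constant.

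The main obstacle is bookkeeping rather than mathematics. Errors propagate through the recursion, and the values grow factorially, roughly like $4^{k}k!$. To control this, I will carry each intermediate $A_j$ and $B_j$ at full precision, or as an upward-rounded value with several extra digits, and not reuse the displayed four-decimal roundings. I will also use upper bounds for $|\gamma_j|$ from their known decimal expansions, so that every quantity entering the sums is an upper bound. The last step is to check that each final value does not exceed the stated one.
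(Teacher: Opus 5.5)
Your proposal is correct and is exactly the paper's argument: the paper obtains the corollary by evaluating \eqref{Ak_rec} and \eqref{Bk_rec} for $3\le k\le 7$ from the initial values \eqref{AB} and rounding up (one small slip in your check is that $A_2=4e^{\gamma/2-1}\approx 1.9638$, not $1.9606$, which gives $A_3\approx 40.44999$). Your precision caveat is genuinely needed, since the exact value is $A_7\approx 8592314.30651$, leaving less than $10^{-4}$ of slack, whereas either reusing the displayed $A_6$ or rounding $|\gamma_0|$ up at its tenth tabulated decimal would add roughly $2.5$--$3\cdot 10^{-4}$ and overshoot the stated constant.
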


\begin{remark}
The coefficients of $1/\sqrt{X}$ in the preceding bounds
grow rapidly with $k$. At $X=10^{12}$, these terms already
exceed $0.3$ for $k=6$ and $8.5$ for $k=7$.
\end{remark}

%%%%%%%%%%%%%%%%%%%%%%%%%%%%%%%%%%%%%%%%%%%%%%%%%%%%%%%%%%%%%%
\section{Coprimality}\label{coprimality}
%%%%%%%%%%%%%%%%%%%%%%%%%%%%%%%%%%%%%%%%%%%%%%%%%%%%%%%%%%%%%%

 Consider now $q,d\in\mathbb{Z}_{\geq 1}$. We write $d\mid q^\infty$ if every prime divisor of $d$
divides $q$. 
With this definition, and by using the following identity, given for example in \cite[Lemma 2]{Graham*78} and \cite[Eq. (5.73)]{Helfgott19}, one can study a sum with coprimality conditions from the same sum without such conditions.  

\begin{lemma}\label{Idd}
 We have the identity 
 \begin{equation*}
 \sum_{d|q^{\infty},d|n}\mu\bigg(\frac{n}{d}\bigg)=\mu(n)\mathds{1}_{\{(n,q)=1\}}(n).
 \end{equation*}
Hence, for any function $h:\mathbb{Z}_{\geq 1}\to\mathbb{C}$, we have the formal identity
 \begin{equation*}
 \sum_{\substack{n\\(n,q)=1}}\frac{\mu(n)}{n}h(n)=\sum_{d|q^\infty}\frac{1}{d}\sum_{n}\frac{\mu(n)}{n}h(dn).
 \end{equation*}
 \end{lemma}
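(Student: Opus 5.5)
The plan is to prove the first identity by multiplicativity and then derive the second by substituting it and interchanging the order of summation.

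For the first identity, fix $q$. Define $g_q(d)=\mathds{1}_{\{d\mid q^\infty\}}(d)$. The left-hand side is the Dirichlet convolution $(g_q*\mu)(n)$. The function $g_q$ is multiplicative, since $d\mid q^\infty$ holds exactly when every prime factor of $d$ divides $q$, which is a condition checked prime by prime. Hence $g_q*\mu$ is multiplicative, and it suffices to evaluate it at prime powers $p^a$ with $a\geq 1$; at $n=1$ both sides equal $1$.

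If $p\nmid q$, the only divisor of $p^a$ dividing $q^\infty$ is $1$. The sum is then $\mu(p^a)$, which matches the right-hand side because $(p^a,q)=1$.

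If $p\mid q$, every divisor $p^b$, $0\leq b\leq a$, divides $q^\infty$. The sum becomes $\sum_{b=0}^{a}\mu(p^{a-b})=\sum_{c=0}^{a}\mu(p^c)=0$ for $a\geq1$. This matches the right-hand side, since $(p^a,q)\neq 1$.

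Alternatively, one can compare Dirichlet series: $\sum_{d\mid q^\infty}d^{-s}=\prod_{p\mid q}(1-p^{-s})^{-1}$, and multiplying by $1/\zeta(s)$ gives $\prod_{p\nmid q}(1-p^{-s})$. This is exactly the Dirichlet series of $\mu\,\mathds{1}_{(n,q)=1}$.

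For the formal identity, substitute the first identity into the left-hand side:
\begin{equation*}
\sum_{\substack{n\\(n,q)=1}}\frac{\mu(n)}{n}h(n)=\sum_{n}\frac{h(n)}{n}\sum_{d\mid q^\infty,\,d\mid n}\mu\bigg(\frac{n}{d}\bigg).
\end{equation*}
Then write $n=dn'$ and swap the order of summation. This gives $\sum_{d\mid q^\infty}\frac{1}{d}\sum_{n'}\frac{\mu(n')}{n'}h(dn')$, as claimed.

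The only delicate point is justifying the interchange of summations. The identity is stated as formal, so no analytic justification is required. In the applications, $h$ is supported on $n\leq X$; all sums are then finite, because only $d\leq X$ with $d\mid q^\infty$ contribute, and the rearrangement is legitimate. I would note this explicitly when the identity is applied. I expect no real obstacle beyond this; the prime-power computation is the heart of the argument.
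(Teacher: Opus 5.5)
Your proof is correct. The paper does not prove this lemma itself. It quotes the identity from Graham (Lemma 2) and Helfgott (Eq.~(5.73)), and adds only the remark that in the applications the sums are finite or absolutely convergent, which is exactly the caveat you raise about the interchange. Your argument therefore supplies what the paper leaves to the literature.

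The multiplicativity check is the most elementary route. Say explicitly that $\mu\,\mathds{1}_{\{(n,q)=1\}}$ is also multiplicative, so that agreement at $1$ and at prime powers is enough. Your Dirichlet-series variant is equally valid. It reuses the Euler product $\sum_{d\mid q^\infty}d^{-s}=\prod_{p\mid q}(1-p^{-s})^{-1}$, which the paper proves anyway in Lemma~\ref{aux}, at the cost of invoking uniqueness of Dirichlet coefficients.

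Your justification of the rearrangement is also the right one for how the paper uses the lemma. In \eqref{awe} and in the proof of Lemma~\ref{tilde_convolution}, $h$ vanishes for $n>X$. So only the finitely many $d\mid q^\infty$ with $d\le X$ contribute, each inner sum is finite, and the swap is legitimate.
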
 

 Lemma \ref{Idd} is meaningful since, as O. Ramar\'e points out in \cite[\S 1]{RA13}, on using merely a M\"obius inversion $\sum_{d|q,d|n}\mu(d)=\mathds{1}_{\{(n,q)=1\}}(n)$ in Lemma \ref{Idd}, one would be led back to a sum with coprimality conditions. In the applications below, the sums are finite or absolutely convergent.
 
\begin{lemma}\label{aux}
Let $q\in\mathbb{Z}_{\geq 1}$, $X>0$, and $s\in\mathbb{C}$
with $\Re s>0$.
\begin{enumerate}[label=(\alph*),ref=\alph*]
\item\label{aux_a} We have
\begin{equation*}
\sum_{d|q^\infty}\frac{1}{d^s}
=\prod_{p|q}\bigg(1-\frac{1}{p^s}\bigg)^{-1}=\frac{q^s}{\varphi_s(q)}.
\end{equation*}

\item\label{aux_b} For every integer $k\geq 0$, we have
\begin{equation*}
\sum_{d|q^\infty}\frac{1}{d^s}
\log^k\bigg(\frac{X}{d}\bigg)
=
\sum_{j=0}^{k}\mbinom{k}{j}\log^{k-j}X\ 
\frac{\mathrm{d}^{j}}{\mathrm{d}s^{j}}\bigg[\frac{q^s}{\varphi_s(q)}\bigg].
\end{equation*}
\end{enumerate}
\end{lemma}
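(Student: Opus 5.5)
Part (\ref{aux_a}) rests on the observation that the integers $d$ with $d\mid q^\infty$ are exactly the products $\prod_{p\mid q}p^{a_p}$ with $a_p\in\mathbb{Z}_{\geq 0}$, each obtained once by unique factorisation. Since $\Re s>0$, we have $|p^{-s}|=p^{-\Re s}<1$. The sum $\sum_{d|q^\infty}d^{-s}$ is therefore a finite product over $p\mid q$ of absolutely convergent geometric series $\sum_{a\geq 0}p^{-as}=(1-p^{-s})^{-1}$. Absolute convergence permits the rearrangement of the multiple series into the product. The second equality is immediate from the definition \eqref{functions}: $q^s/\varphi_s(q)=\prod_{p|q}(1-p^{-s})^{-1}$.

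For part (\ref{aux_b}), I first expand $\log^k(X/d)=(\log X-\log d)^k$ by the binomial theorem, which gives
\begin{equation*}
\sum_{d|q^\infty}\frac{1}{d^s}\log^k\bigg(\frac{X}{d}\bigg)=\sum_{j=0}^k\mbinom{k}{j}\log^{k-j}X\sum_{d|q^\infty}\frac{(-\log d)^j}{d^s}.
\end{equation*}
The inner sums are finite linear combinations of absolutely convergent series, so splitting the sum this way is justified. It then remains to show that
\begin{equation*}
\sum_{d|q^\infty}\frac{(-\log d)^j}{d^s}=\frac{\mathrm{d}^j}{\mathrm{d}s^j}\bigg[\frac{q^s}{\varphi_s(q)}\bigg].
\end{equation*}
This follows by differentiating the series of part (\ref{aux_a}) term by term, because $\frac{\mathrm{d}^j}{\mathrm{d}s^j}d^{-s}=(-\log d)^jd^{-s}$.

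The only real point is justifying the term-by-term differentiation, and I would use normal convergence as announced in the notation section. Fix a compact $K\subset\{\Re s>0\}$ and let $\sigma_0=\min_K\Re s>0$. Then $\sup_{s\in K}|d^{-s}|=d^{-\sigma_0}$. The number of $d\mid q^\infty$ with $d\leq Y$ is $O((\log Y)^{\omega(q)})$, so $\sum_{d|q^\infty}d^{-\sigma_0}$ converges; alternatively, this is part (\ref{aux_a}) at the real point $\sigma_0$. Hence the series of holomorphic functions $\sum_{d|q^\infty}d^{-s}$ converges normally on $\{\Re s>0\}$. By Weierstrass's theorem its sum is holomorphic there and may be differentiated term by term any number of times; normal convergence of the differentiated series follows equally from $\sum_{d|q^\infty}(\log d)^jd^{-\sigma_0}<\infty$, obtained by comparison with $d^{-\sigma_0/2}$.

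Substituting the identity for the inner sums into the binomial expansion gives the claimed formula.
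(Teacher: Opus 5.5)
Your proof is correct and follows the paper's own argument: for (a), unique factorisation of $d\mid q^\infty$ turns the sum into a finite product of geometric series; for (b), you expand $\log^k(X/d)$ binomially and differentiate the series of (a) term by term, justified by normal convergence. Your check of normal convergence on compact subsets of $\{\Re s>0\}$ is more detailed than the paper's, which simply asserts it.
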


\begin{proofbold}\eqref{aux_a}
Every divisor of $q^\infty$ has a unique representation $d=\prod_{p|q}p^{\alpha_p}$ with $\alpha_p\geq 0$.
Since $\Re s>0$, each corresponding geometric series converges normally. Hence
\begin{equation*}
\sum_{d|q^\infty}\frac{1}{d^s}
=\prod_{p|q}\sum_{j=0}^{\infty}\frac{1}{p^{js}}
=\prod_{p|q}\bigg(1-\frac{1}{p^s}\bigg)^{-1}
=\frac{q^s}{\varphi_s(q)}.
\end{equation*}
\noindent\eqref{aux_b} For every integer $j\geq 0$, the series $
\sum_{d|q^\infty}\frac{\log^jd}{d^s}$
converges normally in
$\Re s>0$. Thus, differentiating the identity in
(a) term by term gives
\begin{equation}\label{diff}
\frac{\mathrm{d}^{j}}{\mathrm{d}s^{j}}\bigg[\frac{q^s}{\varphi_s(q)}\bigg]
=(-1)^j\sum_{d|q^\infty}\frac{\log^jd}{d^s}.
\end{equation}
Now, the binomial theorem yields
\begin{equation*}
\sum_{d|q^\infty}\frac{\log^k(X/d)}{d^s}=\sum_{j=0}^{k}\mbinom{k}{j}
\log^{k-j}X(-1)^j\sum_{d|q^\infty}\frac{\log^jd}{d^s}
\end{equation*}
By using \eqref{diff}, we derive the result.
\end{proofbold}

\subsection{A recurrence on derivatives}
\begin{lemma}\label{derivatives_phi}
Let $q,j\in\mathbb{Z}_{\geq 1}$. Then
\begin{align*}
\frac{\mathrm{d}^{j}}{\mathrm{d}s^{j}}
\bigg[\frac{q^s}{\varphi_s(q)}
\bigg|_{s=1}
=\phantom{.}&\sum_{i=0}^{j-1}\mbinom{j-1}{i}(-1)^{i+1}
\frac{\mathrm{d}^{j-1-i}}{\mathrm{d}s^{j-1-i}}
\bigg[\frac{q^s}{\varphi_s(q)}
\bigg|_{s=1}\cdot\sum_{p|q}\log^{i+1}p\ 
\frac{\mathrm{d}^{i}}{\mathrm{d}z^{i}}
\bigg[\frac{1}{p/e^{z}-1}
\bigg|_{z=0}.
\end{align*}
\end{lemma}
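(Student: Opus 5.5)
The natural route is to write $F(s)=\frac{q^s}{\varphi_s(q)}=\prod_{p|q}(1-p^{-s})^{-1}$, which is holomorphic and nonvanishing on $\Re s>0$ by Lemma \ref{aux}\eqref{aux_a}, and to use its logarithmic derivative. I will compute
\begin{equation*}
G(s):=\frac{F'(s)}{F(s)}=-\sum_{p|q}\frac{\log p\ p^{-s}}{1-p^{-s}}=-\sum_{p|q}\frac{\log p}{p^{s}-1},
\end{equation*}
so that $F'=F\cdot G$. Differentiating this identity $j-1$ times by the Leibniz rule gives, for $j\geq 1$,
\begin{equation*}
F^{(j)}(s)=\sum_{i=0}^{j-1}\mbinom{j-1}{i}F^{(j-1-i)}(s)\,G^{(i)}(s).
\end{equation*}
This already has the shape of the claimed recurrence, with $F^{(j-1-i)}$ in the right slot. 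It remains to identify $G^{(i)}(1)$.

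For each prime $p|q$, I set $h_p(s)=\frac{1}{p^s-1}$ and substitute $s=1-z/\log p$, so that $p^s=p\,e^{-z}$. Then $h_p(s)=g_p(z)$ with $g_p(z)=\frac{1}{p/e^{z}-1}$, and $s=1$ corresponds to $z=0$. By the chain rule, $\frac{\mathrm{d}}{\mathrm{d}s}=-\log p\,\frac{\mathrm{d}}{\mathrm{d}z}$, hence
\begin{equation*}
h_p^{(i)}(1)=(-1)^i\log^i p\ g_p^{(i)}(0).
\end{equation*}
It follows that
\begin{equation*}
G^{(i)}(1)=-\sum_{p|q}\log p\ h_p^{(i)}(1)=(-1)^{i+1}\sum_{p|q}\log^{i+1}p\ \frac{\mathrm{d}^{i}}{\mathrm{d}z^{i}}\bigg[\frac{1}{p/e^{z}-1}\bigg|_{z=0}.
\end{equation*}
Substituting this into the Leibniz identity at $s=1$ gives exactly the stated formula.

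There is no real obstacle here. The only points needing care are the sign bookkeeping in the change of variables, which produces the factor $(-1)^{i+1}$, and the justification of termwise differentiation. The latter is immediate because the sum over $p|q$ is finite and each $h_p$ is holomorphic near $s=1$, as $p^s\neq1$ there. The case $q=1$ is trivial: $F\equiv1$, and both sides vanish for $j\geq1$.
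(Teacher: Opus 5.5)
Your proof is correct and follows the paper's argument: write $F'=F\cdot G$ with $G$ the logarithmic derivative of $F(s)=q^s/\varphi_s(q)$, apply the Leibniz rule $j-1$ times, and then identify $G^{(i)}(1)$. The only difference is in that last identification: the paper expands $G(s)=-\sum_{p|q}\log p\sum_{\ell\geq1}p^{-\ell s}$ and differentiates termwise to get $\sum_{\ell}\ell^i p^{-\ell}$, which it recognises as the $i$-th derivative of $\sum_{\ell}e^{\ell z}/p^{\ell}=1/(p/e^{z}-1)$ at $z=0$, whereas your affine substitution $z=(1-s)\log p$ in the closed form $-\sum_{p|q}\log p/(p^s-1)$ gives the same identity directly and needs no series or normal-convergence justification.
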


\begin{proofbold}
Since $\Re s>0$, for every $i\geq 0$, we have
\begin{equation*}
\frac{\mathrm{d}^{i+1}}{\mathrm{d}s^{i+1}}
\bigg[\sum_{p|q}\sum_{\ell=1}^{\infty}
\frac{1}{\ell p^{\ell s}}\bigg]
=(-1)^{i+1}\sum_{p|q}\log^{i+1}p\
\sum_{\ell=1}^{\infty}\frac{\ell^i}{p^{\ell s}},
\end{equation*}
where term by term differentiation is justified by normal convergence.

By applying the above identity with $i=0$, we derive
\begin{align*}
\frac{\mathrm{d}}{\mathrm{d}s}
\bigg[\frac{q^s}{\varphi_s(q)}\bigg]&=\frac{\mathrm{d}}{\mathrm{d}s}\bigg[\exp\bigg(-\sum_{p|q}\log\bigg(1-\frac{1}{p^s}\bigg)\bigg)\bigg]\\
&=\frac{\mathrm{d}}{\mathrm{d}s}\bigg[\exp\bigg(\sum_{p|q}\sum_{\ell=1}^{\infty}
\frac{1}{\ell p^{\ell s}}\bigg)\bigg]
=-\frac{q^s}{\varphi_s(q)}
\sum_{p|q}\log p\sum_{\ell=1}^{\infty}
\frac{1}{p^{\ell s}}.
\end{align*}
Applying the Leibniz differentiation rule a further
$j-1$ times in the preceding identity and evaluating at $s=1$, we obtain
\begin{align}\label{diff_Z}
\frac{\mathrm{d}^{j}}{\mathrm{d}s^{j}}
\bigg[\frac{q^s}{\varphi_s(q)}\bigg|_{s=1}
&=\sum_{i=0}^{j-1}\mbinom{j-1}{i}(-1)^{i+1}
\frac{\mathrm{d}^{j-1-i}}{\mathrm{d}s^{j-1-i}}
\bigg[\frac{q^s}{\varphi_s(q)}\bigg|_{s=1}
\cdot\sum_{p|q}\log^{i+1}p
\sum_{\ell=1}^{\infty}\frac{\ell^i}{p^\ell}.
\end{align}
Finally, for $|e^z/p|<1$, we have the series
\begin{equation*}
\frac{1}{p/e^z-1}
=\sum_{\ell=1}^{\infty}\frac{e^{\ell z}}{p^\ell},
\end{equation*}
which converges normally on $\Re z<\log p$. Differentiating $i$ times
and evaluating at $z=0$, we obtain
\begin{equation*}
\frac{\mathrm{d}^{i}}{\mathrm{d}z^{i}}
\bigg[\frac{1}{p/e^z-1}\bigg|_{z=0}
=\sum_{\ell=1}^{\infty}\frac{\ell^i}{p^\ell}.
\end{equation*}
Substituting this identity into \eqref{diff_Z}
proves the result.
\end{proofbold}

Evaluating at $s=1$ and applying Lemma \ref{derivatives_phi} for $1\leq j\leq4$ gives
\begin{align*}
\frac{\varphi(q)}{q}
\bigg[\frac{q^s}{\varphi_s(q)}\bigg|_{s=1}
=&\ 1,\qquad\qquad\frac{\varphi(q)}{q}
\frac{\mathrm{d}}{\mathrm{d}s}
\bigg[\frac{q^s}{\varphi_s(q)}\bigg|_{s=1}
=-
\sum_{p|q}\frac{\log p}{p-1},\\
\frac{\varphi(q)}{q}
\frac{\mathrm{d}^{2}}{\mathrm{d}s^{2}}
\bigg[\frac{q^s}{\varphi_s(q)}\bigg|_{s=1}
=&
\ \bigg(\sum_{p|q}\frac{\log p}{p-1}\bigg)^2
+\sum_{p|q}\frac{p\log^2p }{(p-1)^2},\\
\frac{\varphi(q)}{q}
\frac{\mathrm{d}^{3}}{\mathrm{d}s^{3}}
\bigg[\frac{q^s}{\varphi_s(q)}\bigg|_{s=1}
=&-
\bigg(\sum_{p|q}\frac{\log p}{p-1}\bigg)^3
-3\sum_{p|q}\frac{\log p}{p-1}\cdot
\sum_{p|q}\frac{p\log^2p }{(p-1)^2}
-\sum_{p|q}\frac{p(p+1)\log^3p }{(p-1)^3},\\
\frac{\varphi(q)}{q}
\frac{\mathrm{d}^{4}}{\mathrm{d}s^{4}}
\bigg[\frac{q^s}{\varphi_s(q)}\bigg|_{s=1}
=&\ \bigg(\sum_{p|q}\frac{\log p}{p-1}\bigg)^4
+6\bigg(\sum_{p|q}\frac{\log p}{p-1}\bigg)^2
\cdot\sum_{p|q}\frac{p\log^2p }{(p-1)^2}
+3\bigg(\sum_{p|q}\frac{p\log^2p }{(p-1)^2}\bigg)^2\\
&\quad+4\sum_{p|q}\frac{\log p}{p-1}\cdot
\sum_{p|q}\frac{p(p+1)\log^3p }{(p-1)^3}
+\sum_{p|q}\frac{p(p^2+4p+1)\log^4p }{(p-1)^4}.
\end{align*}

 \subsection{Theorem \ref{general_polynomial} with coprimality conditions}
 
 Let $Q_k$ be as in \eqref{Pol_2}. For $q\in\mathbb{Z}_{\geq 1}$, define $Q_{0,q}=0$ and
\begin{equation}\label{Qkq}
Q_{k,q}(t)
=\sum_{j=0}^{k-1}\mbinom{k}{j}Q_{k-j}(t)
\frac{\mathrm{d}^{j}}{\mathrm{d}s^{j}}
\bigg[\frac{q^s}{\varphi_s(q)}\bigg|_{s=1},\qquad k\geq 1.
\end{equation}
We have $Q_{k,1}=Q_k$. Moreover, for $k\geq 1$ the polynomial $Q_{k,q}$ has degree
$k-1$ and leading coefficient $\frac{kq}{\varphi(q)}$. Indeed, the term $j=0$ in \eqref{Qkq} is
$\frac{q}{\varphi(q)}Q_k(t)$, of degree $k-1$ and with leading coefficient $\frac{kq}{\varphi(q)}$, while every other term
has degree at most $k-2$.  

 \begin{theorem}\label{coprime_polynomial}
Let $q\in\mathbb{Z}_{\geq 1}$, $k\in\mathbb{Z}_{\geq 0}$ and $A_k,B_k$ be as in Theorem
\ref{general_polynomial}, and put
$\theta=1-\frac{1}{12\log 10}$.
Then, for every $X>0$,
\begin{equation}\label{coprime_polynomial_bound}
\big|\mathbf{\check m}_q^{(k)}(X)-Q_{k,q}(\log X)\big|
\leq
\frac{\sqrt{q}}{\varphi_{1/2}(q)}
\frac{A_k}{\sqrt{X}}
+\frac{q^\theta}{\varphi_\theta(q)}
\frac{B_k\ \mathds{1}_{X\geq 10^{12}}(X)}{\log X}.
\end{equation}
\end{theorem}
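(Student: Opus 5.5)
We apply Lemma \ref{Idd} with $h(n)=\log^k(X/n)\mathds{1}_{n\leq X}$. This gives the exact identity
\begin{equation*}
\mathbf{\check m}_q^{(k)}(X)=\sum_{d|q^\infty}\frac{1}{d}\,\mathbf{\check m}^{(k)}\bigg(\frac{X}{d}\bigg),
\end{equation*}
and the sum is in fact finite, since $\mathbf{\check m}^{(k)}(X/d)=0$ for $d>X$. We then insert Theorem \ref{general_polynomial} into each term, writing $\mathbf{\check m}^{(k)}(X/d)=Q_k(\log(X/d))+E_k(X/d)$. The bound on $E_k$ holds for \emph{every} positive argument, including $X/d<1$, where $\mathbf{\check m}^{(k)}(X/d)=0$ and $E_k(X/d)=-Q_k(\log(X/d))$. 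We may therefore extend the sum to all $d|q^\infty$, which is what lets the main term become a full series.

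\textbf{Main term.} We must check that
\begin{equation*}
\sum_{d|q^\infty}\frac{1}{d}Q_k\bigg(\log\frac{X}{d}\bigg)=Q_{k,q}(\log X).
\end{equation*}
Since $Q_k$ is a polynomial of degree $k-1$ with $Q_k^{(j)}=\frac{k!}{(k-j)!}Q_{k-j}$ (from $Q_k'=kQ_{k-1}$), Taylor expansion gives
\begin{equation*}
Q_k(t-u)=\sum_{j=0}^{k-1}\mbinom{k}{j}Q_{k-j}(t)(-u)^j.
\end{equation*}
Taking $u=\log d$ and summing against $1/d$, identity \eqref{diff} at $s=1$ turns $\sum_{d|q^\infty}(-\log d)^j/d$ into $\frac{\mathrm{d}^j}{\mathrm{d}s^j}[q^s/\varphi_s(q)]_{s=1}$, which is exactly \eqref{Qkq}. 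Absolute convergence of these series follows from Lemma \ref{aux}, and the case $k=0$ is trivial.

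\textbf{Error terms.} The error is bounded by
\begin{equation*}
\sum_{d|q^\infty}\frac{1}{d}\bigg(\frac{A_k\sqrt d}{\sqrt X}+\frac{B_k\mathds{1}_{X/d\geq 10^{12}}}{\log(X/d)}\bigg).
\end{equation*}
The first part equals $\frac{A_k}{\sqrt X}\sum_{d|q^\infty}d^{-1/2}=\frac{A_k}{\sqrt X}\frac{\sqrt q}{\varphi_{1/2}(q)}$ by Lemma \ref{aux}\eqref{aux_a}.

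For the second part we use \eqref{inek} with $t=d\leq X/10^{12}$:
\begin{equation*}
\frac{1}{\log(X/d)}\leq\frac{1}{\log X}\bigg(1+\frac{\log d}{12\log 10}\bigg)\leq\frac{d^{1/(12\log 10)}}{\log X},
\end{equation*}
using $1+x\leq e^x$. Hence
\begin{equation*}
\sum_{d|q^\infty}\frac{1}{d}\cdot\frac{\mathds{1}_{X/d\geq 10^{12}}}{\log(X/d)}\leq\frac{\mathds{1}_{X\geq10^{12}}}{\log X}\sum_{d|q^\infty}d^{-\theta}=\frac{q^\theta}{\varphi_\theta(q)}\frac{\mathds{1}_{X\geq 10^{12}}}{\log X}.
\end{equation*}
Here the indicator $\mathds{1}_{X\geq10^{12}}$ appears because a nonzero term requires $X\geq d\cdot 10^{12}\geq 10^{12}$. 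This choice of exponent is what produces the specific $\theta$ in the statement.

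\textbf{Main obstacle.} The delicate point is the bookkeeping that justifies the extension to $d>X$ and the interchange of summations. The extension relies on Theorem \ref{general_polynomial} holding for $0<X<1$, and the interchange on absolute convergence of $\sum_d d^{-1}|\log d|^j$ and $\sum_d d^{-1/2}$ over $d|q^\infty$, which Lemma \ref{aux} provides since all exponents have positive real part.
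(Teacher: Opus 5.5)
Your proof is correct and follows the paper's argument: the convolution identity from Lemma \ref{Idd}, the Taylor expansion of $Q_k$ combined with \eqref{diff} to identify the main term as $Q_{k,q}$, and termwise application of Theorem \ref{general_polynomial}, which is valid also when $X/d<1$. The only cosmetic difference is how you obtain $\frac{1}{d\log(X/d)}\leq\frac{1}{d^\theta\log X}$: you use \eqref{inek} together with $1+x\leq e^x$, whereas the paper uses the monotonicity of $t\mapsto t^{-(1-\theta)}/\log(X/t)$ on $[1,X/10^{12}]$.
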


\begin{proofbold}
By Lemma \ref{Idd}, for every $X>0$,
\begin{equation}\label{awe}
\mathbf{\check m}_q^{(k)}(X)
=\sum_{d|q^\infty}\frac{1}{d}
\mathbf{\check m}^{(k)}\bigg(\frac{X}{d}\bigg).
\end{equation}
For $k\geq 1$, since $Q'_k(t)=kQ_{k-1}(t)$, Taylor's formula gives
\begin{equation}\label{Taylor}
Q_k(t-\log d)
=\sum_{j=0}^{k-1}\mbinom{k}{j}
Q_{k-j}(t)(-1)^j\log^jd.
\end{equation}
Subsequently, by the identity \eqref{diff}, we obtain
\begin{align}\label{Qkq_alt}
\sum_{d|q^\infty}\frac{1}{d}Q_k(t-\log d)
&=\sum_{j=0}^{k-1}\mbinom{k}{j}Q_{k-j}(t)
(-1)^j\sum_{d|q^\infty}\frac{\log^jd}{d}\nonumber\\
&\qquad=\sum_{j=0}^{k-1}\mbinom{k}{j}Q_{k-j}(t)
\frac{\mathrm{d}^{j}}{\mathrm{d}s^{j}}
\bigg[\frac{q^s}{\varphi_s(q)}\bigg|_{s=1}
=Q_{k,q}(t).
\end{align}
The interchange of summations above is justified by normal convergence. Moreover, the same identity holds for $k=0$, since $Q_0=Q_{0,q}=0$. 

On the other hand, by using identity \eqref{awe} and then identity \eqref{Qkq_alt} with $t=\log X$ we derive
\begin{align*}
\big|\mathbf{\check m}_q^{(k)}(X)-Q_{k,q}(\log X)\big|&=
\bigg|\sum_{d|q^\infty}\frac{1}{d}\bigg(\mathbf{\check m}^{(k)}\bigg(\frac{X}{d}\bigg)-Q_k\bigg(\log\bigg(\frac{X}{d}\bigg)\bigg)\bigg)\bigg|\\
&\leq\sum_{d|q^\infty}\frac{1}{d}\bigg|\mathbf{\check m}^{(k)}\bigg(\frac{X}{d}\bigg)-Q_k\bigg(\log\bigg(\frac{X}{d}\bigg)\bigg)\bigg|.
\end{align*}
Thus, thanks to Theorem \ref{general_polynomial}, we conclude
\begin{equation}\label{marvel}
\big|\mathbf{\check m}_q^{(k)}(X)-Q_{k,q}(\log X)\big|
\leq\frac{A_k}{\sqrt{X}}
\sum_{d|q^\infty}\frac{1}{\sqrt{d}}+B_k\mathds{1}_{X\geq 10^{12}}(X)
\sum_{\substack{d|q^\infty\\d\leq X/10^{12}}}
\frac{1}{d\log(X/d)}.
\end{equation}
Next, for $X\geq 10^{12}$, the function
$t\mapsto\frac{t^{-(1-\theta)}}{\log(X/t)}$
is decreasing on $[1,X/10^{12}]$, where $\theta=1-\frac{1}{12\log 10}$. Consequently,
\begin{equation}\label{calculus}
\frac{1}{d\log(X/d)}
\leq\frac{1}{d^\theta\log X},
\qquad\text{ for }1\leq d\leq X/10^{12},
\end{equation}
and thus Lemma \ref{aux}\eqref{aux_a} gives
\begin{equation*}
\sum_{\substack{d|q^\infty\\d\leq X/10^{12}}}
\frac{1}{d\log(X/d)}
\leq\frac{q^\theta}{\varphi_\theta(q)\log X},
\qquad
\sum_{d|q^\infty}\frac{1}{\sqrt{d}}
=\frac{\sqrt{q}}{\varphi_{1/2}(q)},
\end{equation*}
which we use in the estimate \eqref{marvel} to obtain \eqref{coprime_polynomial_bound}.
\end{proofbold}

By using the definition \eqref{Qkq} and
Lemma \ref{derivatives_phi} with derivatives
of order $1$ and $2$, the first four generalised polynomials $Q_{k,q}$ are
\begin{align*}
Q_{0,q}(t)&=0,\qquad Q_{1,q}(t)=\frac{q}{\varphi(q)},\qquad Q_{2,q}(t)=\frac{2q}{\varphi(q)}
\bigg(t-\gamma-\sum_{p|q}\frac{\log p}{p-1}\bigg),\\
Q&_{3,q}(t)=\frac{3q}{\varphi(q)}
\bigg[\bigg(t-\gamma-\sum_{p|q}\frac{\log p}{p-1}\bigg)^2+\gamma^2+2\gamma_1+\sum_{p|q}\frac{p\log^2p}{(p-1)^2}\bigg].
\end{align*}

\begin{corollary}
Let $q\in\mathbb{Z}_{\geq 1}$, $X>0$ and
$\theta=1-\frac{1}{12\log 10}$. 
Then
\begin{align*}
|m_q(X)|=|m_q(X)-Q_{0,q}(\log X)|
&\leq
\frac{\sqrt{q}}{\varphi_{1/2}(q)}
\frac{\sqrt{2}}{\sqrt{X}}
+\frac{q^\theta}{\varphi_\theta(q)}
\frac{1.862\ \mathds{1}_{X\geq 10^{12}}(X)}
{10^4\log X},\\
\big|\check m_q(X)-Q_{1,q}(\log X)\big|
&\leq
\frac{\sqrt{q}}{\varphi_{1/2}(q)}
\frac{1}{\sqrt{X}}
+\frac{q^\theta}{\varphi_\theta(q)}
\frac{8.55\ \mathds{1}_{X\geq 10^{12}}(X)}
{10^6\log X},\\
\big|\check{\check m}_q(X)-Q_{2,q}(\log X)\big|
&\leq
\frac{\sqrt{q}}{\varphi_{1/2}(q)}
\frac{4e^{\frac{\gamma}{2}-1}}{\sqrt{X}}
+\frac{q^\theta}{\varphi_\theta(q)}
\frac{2.989\ \mathds{1}_{X\geq 10^{12}}(X)}
{10^4\log X},\\
\big|\mathbf{\check m}_q^{(3)}(X)-Q_{3,q}(\log X)\big|
&\leq
\frac{\sqrt{q}}{\varphi_{1/2}(q)}
\frac{40.4500}{\sqrt{X}}
+\frac{q^\theta}{\varphi_\theta(q)}
\frac{1.9583\ \mathds{1}_{X\geq 10^{12}}(X)}
{10^3\log X}.
\end{align*}
\end{corollary}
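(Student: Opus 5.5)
This corollary is a direct specialisation of Theorem \ref{coprime_polynomial} to $k=0,1,2,3$. No new analysis is needed, only a substitution of the known constants $A_k,B_k$ into \eqref{coprime_polynomial_bound}.

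For $k=0,1,2$, we take $(A_k,B_k)$ from \eqref{AB}. These give $(\sqrt2,1.862\cdot10^{-4})$, $(1,8.55\cdot10^{-6})$ and $(4e^{\gamma/2-1},2.989\cdot10^{-4})$, respectively. We also recall the explicit polynomials $Q_{0,q}=0$, $Q_{1,q}=q/\varphi(q)$ and $Q_{2,q}$ listed after Theorem \ref{coprime_polynomial}, which were computed from \eqref{Qkq} using Lemma \ref{derivatives_phi}. For $k=0$, the first displayed equality is trivial since $Q_{0,q}=0$, and $m_q=\mathbf{\check m}_q^{(0)}$ by definition. Likewise $\check m_q=\mathbf{\check m}_q^{(1)}$ and $\check{\check m}_q=\mathbf{\check m}_q^{(2)}$.

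For $k=3$, we use the values $A_3\le 40.4500$ and $B_3\le 1.9583\cdot 10^{-3}$ from the corollary following Theorem \ref{general_polynomial}. These come from evaluating \eqref{Ak_rec} and \eqref{Bk_rec} at $k=3$ with $U_1$ and $V_1$. Since the right-hand side of \eqref{coprime_polynomial_bound} is monotone in $A_k$ and $B_k$, replacing them by these upper bounds is legitimate. As a sanity check, one verifies $A_3=3(2|\gamma_1|A_1+\gamma A_2)+6\sqrt2\,U_1$, where $U_1=4+\frac{56}{135}-\frac1{10}$, and the analogous expression for $B_3$.

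The only step requiring any care is this numerical evaluation of $A_3$ and $B_3$, which is routine arithmetic with $\gamma,\gamma_1$. The rest is immediate from Theorem \ref{coprime_polynomial}.
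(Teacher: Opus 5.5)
Your proposal is correct and matches the paper: the corollary is Theorem~\ref{coprime_polynomial} specialised to $k=0,1,2,3$, with $(A_k,B_k)$ taken from \eqref{AB} for $k\le 2$ and with $A_3,B_3$ evaluated numerically from \eqref{Ak_rec} and \eqref{Bk_rec}. Your formula for $A_3$ gives $A_3\approx 40.44999$, and the analogous computation gives $B_3\approx 1.9580\cdot 10^{-3}$; both are at most the stated constants, so rounding up is justified by the monotonicity you noted.
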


\begin{remark} The first estimate in the preceding corollary also holds
with $(\theta,\mathds{1}_{X\geq10^{12}}(X))$ replaced by
$(\theta',\mathds{1}_{X\geq10^{14}}(X))$, where
$\theta'=1-\frac{1}{14\log 10}$, since the finite range bound for $m$ used in Lemma 5.1
is valid up to $10^{14}$ and the proof of Theorem 7.4 also applies with this threshold.

With this modification to the first estimate, the first three estimates of the corollary sharpen the bounds for $m_q(X)$ and
$\check m_q(X)$ in \cite[Prop. 5.15]{Helfgott19}
and for $\check{\check m}_q(X)$ in \cite[Lemma 3.2]{SZA*22}, respectively.
\end{remark}

%%%%%%%%%%%%%%%%%%%%%%%%%%%%%%%%%%%%%%%%%%%%%%%%%%%%%%%%%%%%%%
\section{The variant $\mathbf{\tilde m}_q^{(k)}(X)$}\label{tilde}
%%%%%%%%%%%%%%%%%%%%%%%%%%%%%%%%%%%%%%%%%%%%%%%%%%%%%%%%%%%%%%

We now transfer the estimates of Theorem
\ref{general_polynomial} to the functions
$\mathbf{\tilde m}_q^{(k)}$ defined in \eqref{definitions}.
The argument is analogous to the proof of \cite[Lemma 3.5]{SZA*22}. As in \S\ref{coprimality}, a convolution with nonnegative coefficients gives both the main polynomial and the error bound.

\subsection{Some identities}

For $q\in\mathbb{Z}_{\geq 1}$ define
\begin{equation*}
w_q(n)=\prod_{\substack{p|n\\p\nmid q}}\frac{1}{p+1}.
\end{equation*}

\begin{lemma}\label{q:id}
Let $n,q,d\in\mathbb{Z}_{\geq 1}$. Then
\begin{equation*}
d|n,\quad (d,q)=1,\quad \mu^2(d)=1,\quad
\frac{n}{d}|(dq)^\infty
\quad\Longleftrightarrow\quad
d=\prod_{\substack{p|n\\p\nmid q}}p.
\end{equation*}
\end{lemma}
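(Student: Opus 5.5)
We prove both implications directly from prime factorisations. Throughout, write $r(n,q)=\prod_{p|n,\,p\nmid q}p$ for the squarefree part of $n$ supported on primes not dividing $q$. The claim is that $d=r(n,q)$ is the unique $d$ satisfying the four conditions on the left.

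For the implication ``$\Leftarrow$'', take $d=r(n,q)$. By construction $d$ is a product of distinct primes, so $\mu^2(d)=1$. Each of these primes divides $n$ and is distinct from the others, so $d|n$. None of them divides $q$, so $(d,q)=1$. It remains to check that $n/d$ divides $(dq)^\infty$. Let $p$ be a prime dividing $n/d$; then $p|n$, and there are two cases.
\begin{itemize}
\item If $p|q$, then $p|dq$.
\item If $p\nmid q$, then $p|d$ by the definition of $d$, so again $p|dq$.
\end{itemize}
In either case every prime factor of $n/d$ divides $dq$, which is exactly the condition $n/d\mid(dq)^\infty$.

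For the implication ``$\Rightarrow$'', suppose $d$ satisfies the four conditions. We compare the prime divisors of $d$ with those of $r(n,q)$; since both are squarefree, it suffices to show they have the same prime divisors.
\begin{itemize}
\item Let $p|d$. Then $p|n$ because $d|n$, and $p\nmid q$ because $(d,q)=1$. Hence $p|r(n,q)$.
\item Conversely, let $p|n$ with $p\nmid q$, and suppose for contradiction that $p\nmid d$. Then the full power of $p$ dividing $n$ also divides $n/d$, so $p|n/d$. Since $n/d\mid(dq)^\infty$, this gives $p|dq$. As $p\nmid q$, we get $p|d$, a contradiction. Hence $p|d$.
\end{itemize}
So $d$ and $r(n,q)$ have the same prime divisors. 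Since $d$ is squarefree ($\mu^2(d)=1$) and so is $r(n,q)$, we conclude $d=r(n,q)$.

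The only step that needs care is the last bullet of ``$\Rightarrow$'': it is where the condition $n/d\mid(dq)^\infty$ is used to force every prime of $n$ not dividing $q$ into $d$. The rest is bookkeeping with prime factorisations.
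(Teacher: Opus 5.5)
Your proof is correct and follows essentially the same route as the paper. For $\Rightarrow$, you use $n/d\mid(dq)^\infty$ and $(d,q)=1$ to show that the primes of $d$ are exactly the primes of $n$ not dividing $q$; the paper phrases this directly via $n\mid(dq)^\infty$ rather than by contradiction. For $\Leftarrow$, you check that every prime of $n/d$ not dividing $q$ lies in $d$, which the paper does by writing out $n/d$ explicitly, and squarefreeness closes the argument in both versions.
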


\begin{proofbold}[Proof]
[$\ \Longrightarrow\ $]
Since $\frac{n}{d}|(qd)^{\infty}$, we have $n|(dq)^{\infty}$. Since $(d,q)=1$, a prime divides $d$ if and only if it divides $n$ and does not divide $q$. We conclude by squarefreeness. 

[$\ \Longleftarrow\ $] If $d=\prod_{\substack{p|n\\p\nmid q}}p$ then $d|n$, $(d,q)=1$ and $\mu^2(d)=1$. Moreover,
\begin{equation*}
\frac{n}{d}
=\prod_{\substack{p^{\alpha}||n\\p|q}}p^{\alpha}\prod_{\substack{p^{\alpha}||n\\p\nmid q}}p^{\alpha}\prod_{\substack{p|n\\p\nmid q}}\frac{1}{p}
=\prod_{\substack{p^{\alpha}||n\\p|q}}p^{\alpha}\prod_{\substack{p^{\alpha}||n\\p\nmid q}}p^{\alpha-1}.
\end{equation*}
Thus, if $p|\frac{n}{d}$ and $p\nmid q$, then $p|d$, whence $\frac{n}{d}|(dq)^{\infty}$.
\end{proofbold}

\begin{lemma}\label{tilde_convolution}
Let $k\in\mathbb{Z}_{\geq 0}$ and $q\in\mathbb{Z}_{\geq 1}$. For every $X>0$, we have
\begin{equation*}
\mathbf{\tilde m}_q^{(k)}(X)=\sum_{n=1}^{\infty}\frac{w_q(n)}{n}
\mathbf{\check m}^{(k)}
\bigg(\frac{X}{n}\bigg).
\end{equation*}
\end{lemma}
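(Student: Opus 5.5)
The plan is to expand both sides as Dirichlet-type double sums and match them term by term, with Lemma \ref{q:id} doing the arithmetic bookkeeping. Writing out the right-hand side,
\begin{equation*}
\sum_{n=1}^{\infty}\frac{w_q(n)}{n}\mathbf{\check m}^{(k)}\bigg(\frac{X}{n}\bigg)
=\sum_{n}\sum_{m}\frac{w_q(n)\mu(m)}{nm}\log^k\bigg(\frac{X}{nm}\bigg)\mathds{1}_{nm\leq X}.
\end{equation*}
This is a finite sum, since $nm\leq X$, so any rearrangement is harmless. Grouping by $N=nm$, it suffices to prove the arithmetic identity
\begin{equation*}
\sum_{nm=N}w_q(n)\mu(m)=\frac{N\mu(N)}{\sigma(N)}\mathds{1}_{\{(N,q)=1\}}(N),\qquad N\geq 1.
\end{equation*}
With this identity, the right-hand side becomes $\sum_{N\leq X,(N,q)=1}\frac{\mu(N)}{\sigma(N)}\log^k(X/N)$, which is $\mathbf{\tilde m}_q^{(k)}(X)$. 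The convention at $k=0$, where the weight is $1$ even when $N=X$, is respected on both sides.

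To prove the identity, I note that $w_q$ is multiplicative: $w_q(p^a)=1/(p+1)$ if $p\nmid q$ and $a\geq1$, and $w_q(p^a)=1$ if $p\mid q$. The convolution $w_q*\mu$ is therefore multiplicative, and I check it on prime powers.
\begin{itemize}
\item If $p\mid q$, then $w_q(p^a)-w_q(p^{a-1})=0$ for $a\geq1$. So the convolution vanishes on $N$ not coprime to $q$.
\item If $p\nmid q$, the value at $p$ is $\frac{1}{p+1}-1=-\frac{p}{p+1}=\frac{p\mu(p)}{\sigma(p)}$.
\item If $p\nmid q$ and $a\geq2$, the value is $\frac{1}{p+1}-\frac{1}{p+1}=0$, matching $\mu(p^a)=0$.
\end{itemize}
Multiplicativity then gives the identity.

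Alternatively, one could argue more in the spirit of Lemma \ref{q:id} and Lemma \ref{Idd}. Expand $\frac{n\mu(n)}{\sigma(n)}=\prod_{p|n}(-1+\frac{1}{p+1})$ for squarefree $n$, and rewrite the coprimality condition via Lemma \ref{Idd}. Lemma \ref{q:id} is exactly the statement that identifies the unique $d$ arising in such a decomposition. However, the multiplicative verification above is shorter, and I would present that.

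There is no real obstacle: all sums are finite. The only points needing care are the multiplicativity of $w_q$ (immediate from its product definition) and confirming that the prime-power computation correctly handles primes dividing $q$.
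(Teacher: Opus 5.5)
Your proof is correct, and it takes a different route from the paper's. You reduce the statement to the Dirichlet convolution identity $(w_q*\mu)(N)=\frac{N\mu(N)}{\sigma(N)}\mathds{1}_{\{(N,q)=1\}}(N)$. You then verify it on prime powers using only the multiplicativity of both sides, and since all sums are finite no rearrangement issue arises.

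The paper instead \emph{derives} the weight $w_q$ rather than verifying it:
\begin{itemize}
\item It uses $\frac{n}{\sigma(n)}=\sum_{d|n}\frac{\mu(d)}{\sigma(d)}$ for squarefree $n$ to write $\mathbf{\tilde m}_q^{(k)}(X)=\sum_{(d,q)=1}\frac{\mu^2(d)}{d\sigma(d)}\mathbf{\check m}_{dq}^{(k)}(X/d)$.
\item It removes the coprimality condition to $dq$ with the $(dq)^\infty$-identity of Lemma \ref{Idd}.
\item It needs the combinatorial Lemma \ref{q:id} to collapse the resulting inner sum over $d$ to $w_q(n)$.
\end{itemize}

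Your argument is shorter, self-contained, and bypasses Lemma \ref{q:id} entirely. However, it presupposes the right weight, which here is supplied by the statement. The paper's argument explains where $w_q$ comes from, and it reuses the coprimality machinery of \S\ref{coprimality} that underlies the other transfers in the paper.
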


\begin{proofbold}
Since $\sigma$ is the sum-of-divisors function, we have
\begin{equation*}
\frac{\mu^2(n)n}{\sigma(n)}
=\mu^2(n)\prod_{p|n}\bigg(1-\frac{1}{p+1}\bigg)
=\mu^2(n)\sum_{d|n}\frac{\mu(d)}{\sigma(d)}.
\end{equation*}
Hence, by recalling definition \eqref{definitions}, we may use the above identity as follows
\begin{align*}
\mathbf{\tilde{m}}_q^{(k)}(X)=\sum_{\substack{n\leq X\\(n,q)=1}}\frac{\mu(n)}{\sigma(n)}\log^k\bigg(\frac{X}{n}\bigg)
&=\sum_{\substack{de\leq X\\(de,q)=1\\(d,e)=1}}
\frac{\mu^2(d)\mu(e)}{d\sigma(d)e}
\log^k\bigg(\frac{X}{de}\bigg)
=\sum_{\substack{d\\(d,q)=1}}
\frac{\mu^2(d)}{d\sigma(d)}
\mathbf{\check m}_{dq}^{(k)}
\bigg(\frac{X}{d}\bigg),
\end{align*}
where the above condition $\{d\leq X\}$ is encoded by the definition of $\mathbf{\check m}_{dq}^{(k)}(X/d)$. Now, by applying Lemma \ref{Idd} to
$\mathbf{\check m}_{dq}^{(k)}(X/d)$ gives
\begin{equation*}
\mathbf{\tilde m}_q^{(k)}(X)=\sum_{\substack{d\\(d,q)=1}}
\frac{\mu^2(d)}{d\sigma(d)}
\sum_{\ell|(dq)^\infty}\frac{1}{\ell}
\mathbf{\check m}^{(k)}\bigg(\frac{X}{d\ell}\bigg)
=\sum_{n\leq X}\frac{1}{n}
\mathbf{\check m}^{(k)}\bigg(\frac{X}{n}\bigg)
\sum_{\substack{d|n\\(d,q)=1\\\frac{n}{d}|(dq)^\infty}}
\frac{\mu^2(d)}{\sigma(d)}.
\end{equation*}
The result is concluded with the help of Lemma \ref{q:id} since
\begin{equation*}
\sum_{\substack{d|n\\(d,q)=1\\\frac{n}{d}|(dq)^\infty}}
\frac{\mu^2(d)}{\sigma(d)}
=\sum_{d=\prod_{p|n,p\nmid q}p}\frac{1}{\sigma(d)}
=\prod_{\substack{p|n\\p\nmid q}}\frac{1}{p+1}
=w_q(n).\qedhere
\end{equation*}
\end{proofbold}

\begin{proofbold}[\textbf{Proof of Theorem \ref{positive}}]
Let $k,q\in\mathbb Z_{\geq 1}$ and $X>0$. By \cite[Thm.~1.2]{RamareZuniga-Alterman*26} with $s=1$ and $q=1$ we have, for any $n\leq X$,
 $\mathbf{\check{m}}^{(k)}\big(\frac{X}{n}\big)\geq 0$. Thus, since $w_q(n)\geq 0$, Lemma \ref{tilde_convolution} gives
 \begin{equation*}
\mathbf{\tilde m}_q^{(k)}(X)=\sum_{n=1}^{\infty}\frac{w_q(n)}{n}
\mathbf{\check m}^{(k)}\bigg(\frac{X}{n}\bigg)=\sum_{n\leq X}\frac{w_q(n)}{n}
\mathbf{\check m}^{(k)}\bigg(\frac{X}{n}\bigg)\geq 0.\qedhere
\end{equation*}
 \end{proofbold}

\subsection{General estimates}

\begin{lemma}\label{Hq_s}
Let $q\in\mathbb{Z}_{\geq 1}$. For $\Re s>0$, define 
$H_q(s)
:=\sum_{n=1}^{\infty}\frac{w_q(n)}{n^s}$. Then
\begin{equation}\label{tilde_euler_product}
H_q(s)
=\frac{q^s}{\varphi_s(q)}
\prod_{p\nmid q}
\bigg(1+\frac{1}{(p+1)(p^s-1)}\bigg)
=\mathpzc p_s(q)
\prod_p\bigg(1+\frac{1}{(p+1)(p^s-1)}\bigg),
\end{equation}
where $\mathpzc p_s(q)$ is defined in \eqref{functions}.
The series and product defining $H_q$ converge normally in the
half-plane $\Re s>0$. In particular, we have
\begin{equation}\label{H_q(1)}
H_q(1)=\zeta(2)\mathpzc p_1(q).
\end{equation}

\end{lemma}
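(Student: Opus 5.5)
The function $w_q$ is multiplicative: for a prime power $p^\alpha$ with $\alpha\geq 1$, we have $w_q(p^\alpha)=1$ if $p\mid q$ and $w_q(p^\alpha)=\frac{1}{p+1}$ if $p\nmid q$. Also $0\le w_q(n)\le 1$ for every $n$, so the Dirichlet series should factor as an Euler product. The plan is to compute the local factors, manipulate them into the two displayed forms, and justify convergence separately.

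\textbf{Local factors.} For $p\mid q$ the local factor is $\sum_{\alpha\ge0}p^{-\alpha s}=(1-p^{-s})^{-1}$. Taking the product over $p\mid q$ gives $q^s/\varphi_s(q)$ by Lemma \ref{aux}\eqref{aux_a}. For $p\nmid q$ the local factor is
\[
1+\frac{1}{p+1}\sum_{\alpha\ge1}p^{-\alpha s}=1+\frac{1}{(p+1)(p^s-1)}.
\]
This gives the first expression in \eqref{tilde_euler_product}. For the second expression, I multiply and divide by the factors at $p\mid q$, so it remains to check
\[
\Big(1-\frac1{p^s}\Big)^{-1}\Big(1+\frac{1}{(p+1)(p^s-1)}\Big)^{-1}=\frac{p+1}{p+1-p^{1-s}}.
\]
The middle factor equals $\frac{(p+1)(p^s-1)+1}{(p+1)(p^s-1)}$, and $(1-p^{-s})^{-1}=\frac{p^s}{p^s-1}$. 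Multiplying the reciprocals yields
\[
\frac{p^s(p+1)}{(p+1)p^s-p}=\frac{p+1}{p+1-p^{1-s}},
\]
which is exactly the local factor of $\mathpzc p_s(q)$.

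\textbf{Convergence.} This is the main point, since the statement claims normal convergence on the whole half-plane $\Re s>0$, not just for $\Re s>1$. The natural majorant is $\sum_n w_q(n)n^{-\sigma}$. I first work with the product. For $p\nmid q$ and $\Re s\ge\sigma_0>0$,
\[
\Big|\frac{1}{(p+1)(p^s-1)}\Big|\le \frac{1}{(p+1)(p^{\sigma_0}-1)}\ll_{\sigma_0} p^{-1-\sigma_0},
\]
which is summable over $p$. So the product over $p\nmid q$, and likewise over all $p$, converges normally on compacts and defines a holomorphic function there. The finitely many factors with $p\mid q$ are harmless.

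For the series, the partial products of the absolute local factors bound $\sum_{n\le N}w_q(n)n^{-\sigma_0}$ uniformly in $N$, because $w_q\ge0$ and every $n\le N$ factors over primes $\le N$. This dominates $\sum_n \sup_{\Re s\ge \sigma_0}|w_q(n)n^{-s}|$. The standard Euler product argument then identifies the sum with the product: the difference between the partial product over $p\le P$ and the full series is bounded by the tail over $n$ having a prime factor greater than $P$, and this tail tends to $0$.

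\textbf{Value at $s=1$.} Finally, at $s=1$ the local factor over all primes is
\[
1+\frac{1}{(p+1)(p-1)}=\frac{p^2}{p^2-1}=(1-p^{-2})^{-1}.
\]
The product over $p$ is therefore $\zeta(2)$, which gives \eqref{H_q(1)}.
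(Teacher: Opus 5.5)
Your proposal is correct and follows essentially the same route as the paper: you use the multiplicativity of $w_q$ to get the local factors $(1-p^{-s})^{-1}$ for $p\mid q$ and $1+\frac{1}{(p+1)(p^s-1)}$ for $p\nmid q$, absorb the factors at $p\mid q$ into $\mathpzc p_s(q)$, and evaluate at $s=1$ to obtain $\zeta(2)\mathpzc p_1(q)$. Your convergence argument is more explicit than the paper's, which only asserts normal convergence from the local values. It bounds partial sums by partial products using $w_q\geq 0$, uses the estimate $\frac{1}{(p+1)(p^{\sigma_0}-1)}\ll_{\sigma_0}p^{-1-\sigma_0}$, and identifies the series with the product through the tail argument.
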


\begin{proofbold}
The function $w_q$ is multiplicative and for $\alpha\geq 1$, we have 
\begin{equation*}
\frac{w_q(p^\alpha)}{p^{\alpha s}}=\frac{1}{p^{\alpha s}}\quad\text{ if }\ p|q\qquad\text{ and }\qquad \frac{w_q(p^\alpha)}{p^{\alpha s}}=\frac{1}{p^{\alpha s}(p+1)},\quad\text{ if }\ p\nmid q.
\end{equation*}
Since there is only a finite number of primes that divide $q$, we conclude from the second expression above that the series defining $H_q(s)$ converges normally in $\{\Re s>0\}$. Further, in that region, we have the following Euler product
\begin{equation*}
H_q(s)=\prod_{p|q}\bigg(\sum_{\alpha\geq 0}\frac{1}{p^{\alpha s}}\bigg)\prod_{p\nmid q}\bigg(1+\frac{1}{(p+1)}\sum_{\alpha\geq 1}\frac{1}{p^{\alpha s}}\bigg)
=\frac{q^s}{\varphi_s(q)}
\prod_{p\nmid q}
\bigg(1+\frac{1}{(p+1)(p^s-1)}\bigg).
\end{equation*}
Rearranging the Euler product gives \eqref{tilde_euler_product}. Finally, at $s=1$, we have
\begin{equation*}
H_q(1)
=\frac{q}{\varphi(q)}
\prod_{p\nmid q}\bigg(1-\frac{1}{p^2}\bigg)^{-1}
=\zeta(2)\mathpzc p_1(q).\qedhere
\end{equation*}
\end{proofbold}
Let $Q_k$ be as in \eqref{Pol_2}. For $q\in\mathbb{Z}_{\geq 1}$,
set $\widetilde Q_{0,q}=0$ and for $k\in\mathbb{Z}_{\geq 1}$ define
\begin{equation}\label{tilde_Pol}
\widetilde Q_{k,q}(t)
:=\sum_{j=0}^{k-1}\mbinom{k}{j}Q_{k-j}(t)
\frac{\mathrm{d}^{j}}{\mathrm{d}s^{j}}
\bigg[H_q(s)\bigg|_{s=1}.
\end{equation}
Observe that, for $k\geq 1$, the polynomial $\widetilde Q_{k,q}$
has degree $k-1$ and leading coefficient
$k\zeta(2)\mathpzc p_1(q)$. Indeed, the term $j=0$ in \eqref{tilde_Pol}
is $H_q(1)Q_k(t)$ of degree $k-1$ and leading coefficient $H_q(1)k$, whose value is given by \eqref{H_q(1)}, and the remaining terms have smaller degree. 

\begin{lemma}\label{tilde_initial_bounds}
Let $q\in\mathbb{Z}_{\geq 1}$ and set
$\theta=1-\frac{1}{12\log10}$.
For $X>0$, we have
\begin{equation}
\begin{aligned}
\big|\mathbf{\tilde m}_q^{(0)}(X)\big|
&\leq
\mathpzc p_{1/2}(q)\frac{\sqrt{2}}{\sqrt X}
+\mathpzc p_\theta(q)\frac{4.159}{10^4}
 \frac{\mathds{1}_{X\geq 10^{12}}(X)}{\log X},
\\
\big|\mathbf{\tilde m}_q^{(1)}(X)
      -\zeta(2)\mathpzc p_1(q)\big|
&\leq
\mathpzc p_{1/2}(q)\frac{\zeta(2)}{\sqrt X}
+\mathpzc p_\theta(q)\frac{6.781}{10^5}
 \frac{\mathds{1}_{X\geq 10^{12}}(X)}{\log X}.
\end{aligned}
\end{equation}
\end{lemma}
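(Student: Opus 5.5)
The plan is to deduce both inequalities from the convolution identity of Lemma \ref{tilde_convolution}, which for $k=0,1$ reads
\begin{equation*}
\mathbf{\tilde m}_q^{(k)}(X)=\sum_{n\leq X}\frac{w_q(n)}{n}\,\mathbf{\check m}^{(k)}\bigg(\frac{X}{n}\bigg).
\end{equation*}
For $k=1$ I first subtract the main term. Since $\sum_n w_q(n)/n=H_q(1)=\zeta(2)\mathpzc p_1(q)$ by \eqref{H_q(1)}, and $\mathbf{\check m}^{(1)}(Y)=0$ for $Y<1$, I get
\begin{equation*}
\mathbf{\tilde m}_q^{(1)}(X)-\zeta(2)\mathpzc p_1(q)=\sum_{n}\frac{w_q(n)}{n}\Big(\check m\Big(\frac Xn\Big)-1\Big).
\end{equation*}
This is legitimate because for $n>X$ the bracket equals $-1$, and \eqref{checklog^2} still bounds $|\check m(Y)-1|$ by $1/\sqrt Y$ there. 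The case $k=0$ has no main term.

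Next, I insert the pointwise bounds \eqref{mlog^2} and \eqref{checklog^2} at $Y=X/n$. The $1/\sqrt{Y}$ parts produce the series $\sum_n w_q(n)n^{-1/2}=H_q(1/2)$. For the logarithmic parts, only $n\leq X/10^{12}$ contributes, and I use \eqref{calculus} in the form $\frac{1}{n\log(X/n)}\leq \frac{1}{n^\theta\log X}$. This yields $H_q(\theta)/\log X$. By Lemma \ref{Hq_s},
\begin{equation*}
H_q(s)=\mathpzc p_s(q)\,C(s),\qquad C(s)=\prod_p\bigg(1+\frac{1}{(p+1)(p^s-1)}\bigg),
\end{equation*}
so the $q$-dependence is exactly $\mathpzc p_{1/2}(q)$ and $\mathpzc p_\theta(q)$, as in the statement. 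The numerical constants then come from computing $C(\theta)$ rigorously, which I expect to give $1.862\cdot10^{-4}\,C(\theta)\leq 4.159\cdot10^{-4}$ and $8.55\cdot10^{-6}\,C(\theta)\leq 6.781\cdot10^{-5}$ (this needs $C(\theta)\lesssim 7.9$). To do this I take a finite Euler product over $p\leq P$ and bound the tail $p>P$ using $\log(1+x)\leq x$ and an explicit prime-counting estimate.

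The main obstacle is the $1/\sqrt X$ coefficient. The route above gives $\sqrt2\,C(1/2)$ and $C(1/2)$, but $C(1/2)$ is noticeably larger than $1$ (the factor at $p=2$ alone is about $1.8$). That is incompatible with the stated $\sqrt2$, and $\zeta(2)$ is only slightly bigger than $1$. So the crude insertion of \eqref{mlog^2} at every $n$ is too lossy.

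To fix this, I would split the sum over $n$ according to the size of $X/n$. In the range where $X/n$ is small (say $X/n\leq 10^{12}$), I would use sharper finite-range information, in the style of \cite[Lemma~5.9--5.10]{Helfgott19} and \cite[Lemma 3.5]{SZA*22}. I would also exploit cancellation: for $k=1$, the terms with $n>X$ contribute exactly $-\sum_{n>X}w_q(n)/n$, whose size is $O(1/X)$ rather than $O(1/\sqrt X)$. In the large range I would keep the bound $\sqrt{n/X}$, so that the sum over $n$ contributes only a tail that is small compared with $C(1/2)$.

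If this splitting still does not reach the stated constants, the fallback is a direct verification for moderate $X$, again using the finite-range bounds of \cite{Helfgott19}. For $X\geq 10^{12}$, any excess in the $1/\sqrt X$ coefficient can be absorbed into the $1/\log X$ term, as at the end of the proof of Lemma \ref{initial_estimates}. Making this bookkeeping close with precisely $\sqrt2$, $\zeta(2)$, $4.159\cdot10^{-4}$ and $6.781\cdot10^{-5}$ is the step I expect to require the most care.
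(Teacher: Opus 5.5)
There is a genuine gap, located exactly at the step you flag. Inserting the bounds of Lemma~\ref{initial_estimates} into Lemma~\ref{tilde_convolution} at level $q$ gives a $1/\sqrt X$ coefficient of $\sqrt2\,H_q(1/2)$ (resp.\ $H_q(1/2)$), where $H_q(1/2)=\mathpzc p_{1/2}(q)\,C(1/2)$ and $C(1/2)=H_1(1/2)\approx 3.5$. None of your proposed repairs removes the factor $C(1/2)$.

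\begin{itemize}
\item The splitting is backwards. The terms with $X/n$ large are those with $n$ small, and they carry essentially all of $C(1/2)$; your factor $1.8$ at $p=2$ comes from $n=2,4,8,\dots$. The $n>X$ cancellation for $k=1$ only affects an $O(1/X)$ tail.
\item For $X<10^{12}$ the statement has no $1/\log X$ term, so there is nothing to absorb the excess into.
\item For $X\geq10^{12}$, absorbing at level $q$ leaves a term $\mathpzc p_{1/2}(q)\,(C(1/2)-1)\sqrt2\cdot 12\log 10\cdot10^{-6}/\log X$. This is not $O(\mathpzc p_\theta(q)/\log X)$ uniformly in $q$, because $\mathpzc p_{1/2}(q)/\mathpzc p_\theta(q)=\prod_{p|q}\big(1+\frac{\sqrt p-p^{1-\theta}}{p+1-\sqrt p}\big)$ is unbounded.
\end{itemize}
Your reading of the constants is also off. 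In fact $C(\theta)=H_1(\theta)\leq 1.693$, and $4.159\cdot10^{-4}$, $6.781\cdot10^{-5}$ are not simply $1.862\cdot10^{-4}C(\theta)$ and $8.55\cdot10^{-6}C(\theta)$.

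The missing idea is to settle $q=1$ first, and then lift to general $q$ by convolving $\mathbf{\tilde m}$ with itself rather than with $\check m$.

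For $q=1$ and $0<X\leq10^{12}$, you need finite-range bounds on $\mathbf{\tilde m}$ itself, namely $|\mathbf{\tilde m}^{(0)}(X)|\leq\sqrt2/\sqrt X$ and $|\mathbf{\tilde m}^{(1)}(X)-\zeta(2)|\leq\zeta(2)/\sqrt X$ \cite[Eq.~(6.132) and Lemma~6.3]{Helfgott19}. Your fallback points this way, but it only helps for $q=1$.

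For $q=1$ and $X\geq10^{12}$, your convolution argument with $H_1(1/2)\leq3.575$ and $H_1(\theta)\leq1.693$ works once you absorb the excess $(3.575-1)\sqrt2/\sqrt X$ (resp.\ $(3.575-\zeta(2))/\sqrt X$) using $1/\sqrt X\leq 12\log10\cdot10^{-6}/\log X$. That is where the constants come from:
$1.693\cdot1.862\cdot10^{-4}+2.575\sqrt2\cdot12\log10\cdot10^{-6}<4.159\cdot10^{-4}$ and $1.693\cdot8.55\cdot10^{-6}+(3.575-\zeta(2))\cdot12\log10\cdot10^{-6}<6.781\cdot10^{-5}$.

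For general $q$, let $v_q$ be completely multiplicative with $v_q(p)=\mathds{1}_{p|q}/(p+1)$. Checking prime powers gives $\sum_{dm=n,\,d|q^\infty}v_q(d)\mu(m)/\sigma(m)=\mu(n)\mathds{1}_{(n,q)=1}/\sigma(n)$. Hence, for $k=0,1$,
\[
\mathbf{\tilde m}_q^{(k)}(X)-\widetilde Q_{k,q}(\log X)=\sum_{d|q^\infty}v_q(d)\big(\mathbf{\tilde m}^{(k)}(X/d)-\widetilde Q_{k,1}(\log(X/d))\big).
\]
Since $v_q\geq0$ and $\sum_d v_q(d)d^{1-s}=\mathpzc p_s(q)$, insert the $q=1$ bounds together with $1/\log(X/d)\leq d^{1-\theta}/\log X$ for $d\leq X/10^{12}$. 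This produces exactly the factors $\mathpzc p_{1/2}(q)$ and $\mathpzc p_\theta(q)$, with the $q=1$ constants unchanged.
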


\begin{proofbold}
By \cite[Eq. (6.132)]{Helfgott19}
and \cite[Lemma 6.3]{Helfgott19} with $v=1$,
respectively,  we have
\begin{equation*}
\big|\mathbf{\tilde m}^{(0)}(X)\big|
\leq\frac{\sqrt{2}}{\sqrt{X}},\quad
\big|\mathbf{\tilde m}^{(1)}(X)-\zeta(2)\big|
\leq\frac{\zeta(2)}{\sqrt{X}},\quad 0<X\leq10^{12}.
\end{equation*}
By \eqref{H_q(1)}, we have the identity
\begin{equation*}
\mathbf{\tilde m}^{(1)}(X)-\zeta(2)
=\mathbf{\tilde m}^{(1)}(X)-H_1(1)
=
\sum_{n=1}^{\infty}\frac{w_1(n)}{n}
\bigg(\check m\bigg(\frac Xn\bigg)-1\bigg).
\end{equation*}
By applying Lemmas \ref{tilde_convolution} and \ref{initial_estimates}, together with
\eqref{calculus} for the logarithmic terms, we have
\begin{align}\label{valss}
\big|\mathbf{\tilde m}^{(0)}(X)\big|
&\leq
\frac{H_1(1/2)\sqrt{2}}{\sqrt X}
+
\frac{1.862\ H_1(\theta) \mathds{1}_{X\geq 10^{12}}(X)}{10^4\log X},\nonumber\\
\big|\mathbf{\tilde m}^{(1)}(X)-\zeta(2)\big|
&\leq
\frac{H_1(1/2)}{\sqrt X}
+\frac{8.55\ H_1(\theta) \mathds{1}_{X\geq 10^{12}}(X)}{10^6\log X}.
\end{align}
Moreover, thanks to the proof of \cite[Lemma~3.5]{SZA*22} and Lemma \ref{Hq_s}, we have 
\begin{equation*}
\sum_{n=1}^{\infty}\frac{w_1(n)}{\sqrt n}=H_1(1/2)
\leq 3.575,
\qquad
\sum_{n=1}^{\infty}\frac{w_1(n)}{n^\theta}=H_1(\theta)
\leq 1.693.
\end{equation*}
Furthermore, since the function $t\geq e^2\mapsto\log t/\sqrt{t}$ is decreasing, we have from \eqref{valss} and for $X>10^{12}$,
\begin{align*}
\big|\mathbf{\tilde m}^{(0)}(X)\big|
&\leq\frac{\sqrt{2}}{\sqrt X}+\frac{1}{\log X}\bigg(\frac{1.693\cdot1.862}{10^4}+\frac{(3.575-1)\sqrt{2}\cdot 12\log10}{10^6}\bigg),\nonumber\\
\big|\mathbf{\tilde m}^{(1)}(X)-\zeta(2)\big|
&\leq\frac{\zeta(2)}{\sqrt X}+\frac{1}{\log X}\bigg(\frac{1.693\cdot8.55}{10^6}+\frac{(3.575-\zeta(2))\,12\log10}{10^6}\bigg).
\end{align*}
Hence, for $X>0$, by  upper bounding the above constants accompanying the logarithmic terms, we obtain the result for $q=1$.

Next, we may study coprimality conditions. For $q\in\mathbb{Z}_{\geq 1}$, let $v_q$ be the completely
multiplicative function defined on primes by
$
v_q(p)=
\frac{\mathds{1}_{p|q}}{p+1}.
$
For any $q\in\mathbb{Z}_{\geq 1}$ and $k\in\{0,1\}$, the convolution identities in
\cite[Eq.~(6.22) and the proof of Lemma~6.28]{Helfgott19} give
\begin{equation}\label{_starstar_}
\mathbf{\tilde m}_q^{(k)}(X)
-\widetilde Q_{k,q}(\log X)=
\sum_{d\mid q^\infty}v_q(d)\bigg(\mathbf{\tilde m}^{(k)}\bigg(\frac{X}{d}\bigg)
-\widetilde Q_{k,1}\bigg(\log\bigg(\frac{X}{d}\bigg)\bigg)\bigg),
\end{equation}
where the subtraction of the main term for $k=1$
is justified since 
$\widetilde Q_{1,1}(t)=\zeta(2)$ and
\begin{equation*}
\sum_{d\mid q^\infty}v_q(d)
=
\prod_{p\mid q}\left(1-\frac{1}{p+1}\right)^{-1}
=
\mathpzc p_1(q).
\end{equation*}
In fact, observe that
\begin{equation*}
\sum_{d=1}^{\infty}v_q(d)d^{1-s}=
\prod_{p\mid q}\bigg(1-\frac{p^{1-s}}{p+1}\bigg)^{-1}
=\mathpzc p_s(q),\qquad \Re s>0.
\end{equation*}
Applying the bounds just proved for $q=1$ to \eqref{_starstar_},
using \eqref{calculus} for the logarithmic terms, and evaluating
the above identity at $s=1/2$ and $s=\theta$
proves the result.
\end{proofbold}

\begin{remark}
The preceding lemma improves the global estimates for $\mathbf{\tilde m}_q^{(0)}(X)$ and $\mathbf{\tilde m}_q^{(1)}(X)$ in \cite[Lemma~6.28]{Helfgott19} and \cite[Prop.~6.8]{Helfgott19}, respectively, with $\alpha_1=1/2$.
\end{remark}

Consider the following constants
\begin{align}\label{AB_tilde}
(\tilde{A}_0,&\tilde{B}_0)=\bigg(\sqrt{2},\frac{4.159}{10^4}\bigg),\quad (\tilde{A}_1,\tilde{B}_1)=\bigg(\zeta(2),\frac{6.781}{10^5}\bigg),\nonumber\\
&\quad(\tilde{A}_k,\tilde{B}_k)=(3.575\ A_k,1.693\ B_k),\quad k\geq 2,
\end{align}
where $(A_k,B_k)$, $k\geq 2$, are defined by \eqref{AB}, \eqref{Ak_rec} and \eqref{Bk_rec}. We have the following result.
\begin{theorem}\label{tilde_general_bound}
Let $q\in\mathbb{Z}_{\geq 1}$ and $k\in\mathbb{Z}_{\geq 0}$. Let $(\tilde{A}_k,\tilde{B}_k)$ be as in \eqref{AB_tilde} and put $\theta=1-\frac{1}{12\log 10}$.
Then, for every $X>0$,
\begin{equation*}
\big|\mathbf{\tilde m}_q^{(k)}(X)
-\widetilde Q_{k,q}(\log X)\big|
\leq
\mathpzc p_{1/2}(q) 
\frac{\tilde{A}_k}{\sqrt{X}}
+\mathpzc p_\theta(q) 
\frac{\tilde{B}_k\ \mathds{1}_{X\geq 10^{12}}(X)}{\log X}.
\end{equation*}
\end{theorem}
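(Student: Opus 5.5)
For $k=0,1$ the statement is exactly Lemma \ref{tilde_initial_bounds}. To see this, note that $\widetilde Q_{0,q}=0$ and $\widetilde Q_{1,q}(t)=H_q(1)=\zeta(2)\mathpzc p_1(q)$ by \eqref{H_q(1)}. So assume $k\geq 2$. The plan is to run the argument of Theorem \ref{coprime_polynomial}, with the weights $1/d$ over $d|q^\infty$ replaced by the weights $w_q(n)/n$ over all $n$, using the convolution identity of Lemma \ref{tilde_convolution}.

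\textbf{Main term.} We first check that
\begin{equation*}
\sum_{n=1}^\infty\frac{w_q(n)}{n}Q_k\bigg(\log\frac Xn\bigg)=\widetilde Q_{k,q}(\log X).
\end{equation*}
The Taylor formula \eqref{Taylor}, with $d$ replaced by $n$, rewrites the left-hand side as
\begin{equation*}
\sum_{j=0}^{k-1}\mbinom{k}{j}Q_{k-j}(\log X)(-1)^j\sum_n\frac{w_q(n)\log^jn}{n}.
\end{equation*}
By Lemma \ref{Hq_s}, the series $H_q(s)$ converges normally on $\Re s>0$. We may therefore differentiate term by term at $s=1$, which gives $(-1)^j\sum_n w_q(n)\log^jn/n=H_q^{(j)}(1)$. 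This is exactly \eqref{tilde_Pol}. The same normal convergence justifies interchanging the $n$-sum with the finite $j$-sum.

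\textbf{Error term.} Subtracting the main-term identity from Lemma \ref{tilde_convolution} and applying the triangle inequality gives
\begin{equation*}
\big|\mathbf{\tilde m}_q^{(k)}(X)-\widetilde Q_{k,q}(\log X)\big|\leq\sum_n\frac{w_q(n)}{n}\bigg|\mathbf{\check m}^{(k)}\bigg(\frac Xn\bigg)-Q_k\bigg(\log\frac Xn\bigg)\bigg|.
\end{equation*}
Here all $n\geq1$ contribute, including $n>X$, where $\mathbf{\check m}^{(k)}(X/n)=0$. Theorem \ref{general_polynomial} holds for every $X>0$, so it covers these terms too and bounds the right-hand side by
\begin{equation*}
\frac{A_k}{\sqrt X}\sum_n\frac{w_q(n)}{\sqrt n}+B_k\,\mathds{1}_{X\geq10^{12}}(X)\sum_{n\leq X/10^{12}}\frac{w_q(n)}{n\log(X/n)}.
\end{equation*}
By \eqref{calculus}, the second sum is at most $H_q(\theta)/\log X$. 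The first sum is $H_q(1/2)$.

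\textbf{Step I expect to be the main obstacle: the constants.} It remains to show $H_q(s)\leq \mathpzc p_s(q)H_1(s)$ for real $s\in\{1/2,\theta\}$. The bounds $H_1(1/2)\leq3.575$ and $H_1(\theta)\leq1.693$ from the proof of Lemma \ref{tilde_initial_bounds} then give $\tilde A_k=3.575A_k$ and $\tilde B_k=1.693B_k$. The inequality follows from \eqref{tilde_euler_product}, which reads $H_q(s)=\mathpzc p_s(q)\prod_p\big(1+\frac{1}{(p+1)(p^s-1)}\big)$. For $q=1$ we have $\mathpzc p_s(1)=1$, so this product is exactly $H_1(s)$, and hence $H_q(s)=\mathpzc p_s(q)H_1(s)$ with equality. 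One small point needs checking: for real $s>0$, each factor of $\mathpzc p_s(q)$, namely $\frac{p+1}{p+1-p^{1-s}}$, is positive, since $p^{1-s}<p+1$. This ensures that $\mathpzc p_s(q)$ is positive and the bound is meaningful.
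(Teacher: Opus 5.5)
Your proposal is correct and follows essentially the same route as the paper. Lemma \ref{tilde_initial_bounds} handles $k\in\{0,1\}$. For $k\geq 2$, the Taylor identity \eqref{Taylor} with the normal convergence of $H_q$ identifies the main term, and Lemma \ref{tilde_convolution}, Theorem \ref{general_polynomial} and \eqref{calculus} bound the error by $A_kH_q(1/2)/\sqrt{X}+B_kH_q(\theta)\mathds{1}_{X\geq 10^{12}}(X)/\log X$. Your observation that \eqref{tilde_euler_product} gives the exact identity $H_q(s)=\mathpzc p_s(q)H_1(s)$ makes explicit the step the paper attributes to external product estimates combined with \eqref{tilde_euler_product}.
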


\begin{proofbold}
For $k\in\{0,1\}$, the result follows from
Lemma~\ref{tilde_initial_bounds}, since
$\widetilde Q_{0,q}(\log X)=0$ and
$\widetilde Q_{1,q}(\log X)=\zeta(2)\mathpzc p_1(q)$. 

Henceforth, assume $k\geq 2$; applying identity \eqref{Taylor} gives
\begin{align}\label{tilde_polynomial_convolution}
&\sum_{n=1}^{\infty}\frac{w_q(n)}{n}Q_k(t-\log n)=\sum_{n=1}^{\infty}\frac{w_q(n)}{n}\sum_{j=0}^{k-1}\mbinom{k}{j}
Q_{k-j}(t)(-1)^j\log^jn\\
&=\sum_{j=0}^{k-1}\mbinom{k}{j}
Q_{k-j}(t)\sum_{n=1}^{\infty}\frac{w_q(n)}{n}(-1)^j\log^jn=\sum_{j=0}^{k-1}\mbinom{k}{j}
Q_{k-j}(t)\frac{\mathrm{d}^j}{\mathrm{d}s^j}\bigg[\sum_{n=1}^{\infty}\frac{w_q(n)}{n^s}\bigg|_{s=1}=\widetilde Q_{k,q}(t).\nonumber
\end{align}
The interchange of summations and the term-by-term differentiation are justified since the series $H_q(s)$ defined in Lemma \ref{Hq_s} and all its derivatives converge normally in $\{\Re s>0\}$. 

On the other hand, by Lemma \ref{tilde_convolution}, identity
\eqref{tilde_polynomial_convolution} with $t=\log X$ and Theorem
\ref{general_polynomial}, for every $X>0$ and $k\in\mathbb{Z}_{\geq 2}$, we obtain
\begin{align*}
\big|\mathbf{\tilde m}_q^{(k)}(X)
-\widetilde Q_{k,q}(\log X)\big|
&\leq
\sum_{n=1}^{\infty}\frac{w_q(n)}{n}
\bigg|\mathbf{\check m}^{(k)}
\bigg(\frac{X}{n}\bigg)
-Q_k\bigg(\log\frac{X}{n}\bigg)\bigg|\\
&\leq
\frac{A_k}{\sqrt{X}}
\sum_{n=1}^{\infty}\frac{w_q(n)}{\sqrt{n}}
+\mathds{1}_{X\geq 10^{12}}(X)\ B_k\sum_{n\leq X/10^{12}}
\frac{w_q(n)}{n\log(X/n)}.
\end{align*}
The same analysis to obtain inequality \eqref{calculus} together with Lemma \ref{Hq_s} give
\begin{equation*}
\big|\mathbf{\tilde m}_q^{(k)}(X)
-\widetilde Q_{k,q}(\log X)\big|
\leq\frac{A_kH_q(1/2)}{\sqrt{X}}
+\frac{B_kH_q(\theta)\
\mathds{1}_{X\geq 10^{12}}(X)}{\log X}.
\end{equation*}
The product estimates in the proof of
\cite[Lemma 3.5]{SZA*22} and the expression \eqref{tilde_euler_product} yield
$H_q(1/2)\leq\mathpzc p_{1/2}(q) 3.575$ and
$H_q(\theta)\leq\mathpzc p_\theta(q) 1.693 $.
Since $\tilde A_k=3.575\ A_k$ and $\tilde B_k=1.693\ B_k$
for $k\geq 2$, this proves the stated inequality.
\end{proofbold}

\subsection{First polynomials}

Put
\begin{equation}\label{abq}
\mathfrak a_q
=\gamma+\sum_p\frac{\log p}{p(p-1)}
+\sum_{p|q}\frac{\log p}{p},\quad \mathfrak b_q
=\sum_p
\frac{(p^2+p-1)\log^2p}{p^2(p-1)^2}
+\sum_{p|q}\frac{(p+1)\log^2p}{p^2}.
\end{equation}
Note that
\begin{equation*}
\sum_p\frac{\log p}{p(p-1)}
=0.7553666108\ldots,
\quad\text{ and }\quad
\sum_p\frac{(p^2+p-1)\log^2p}{p^2(p-1)^2}
=1.6259635276\ldots.
\end{equation*}
With the notation \eqref{abq}, the first four polynomials $\widetilde Q_{k,q}(t)$ are
\begin{align}\label{tilde_first_polynomials}
\widetilde Q_{0,q}(t)&=0,\qquad
\widetilde Q_{1,q}(t)=\zeta(2)\mathpzc p_1(q),\qquad
\widetilde Q_{2,q}(t)=2\zeta(2)\mathpzc p_1(q)(t-\mathfrak a_q),
\\
&\widetilde Q_{3,q}(t)
=3\zeta(2)\mathpzc p_1(q)
\bigg((t-\mathfrak a_q)^2
+\gamma^2+2\gamma_1+\mathfrak b_q\bigg).
\nonumber
\end{align}
By definition \eqref{tilde_Pol}, it suffices to calculate
$H_q(1)$, $H_q'(1)$ and $H_q''(1)$.
The value of $H_q(1)$ is given by \eqref{H_q(1)}.
Subsequently, by performing logarithmic differentiation at $s=1$ via Lemma \ref{Hq_s}, we have
\begin{align*}
\frac{H'_q(1)}{H_q(1)}
&=-\sum_{p|q}\frac{\log p}{p-1}
-\sum_{p\nmid q}\frac{\log p}{p(p-1)}
=\gamma-\mathfrak a_q,\\
\frac{H''_q(1)}{H_q(1)}
-\bigg(\frac{H'_q(1)}{H_q(1)}\bigg)^2
&=\sum_{p|q}\frac{p\log^2p}{(p-1)^2}+\sum_{p\nmid q}
\bigg(\frac{p}{(p-1)^2}-\frac{p+1}{p^2}\bigg)
\log^2p
=\mathfrak b_q.
\end{align*}
Thus
\begin{equation*}
H'_q(1)
=H_q(1)(\gamma-\mathfrak a_q),
\qquad
H''_q(1)
=H_q(1)
\bigg((\gamma-\mathfrak a_q)^2+\mathfrak b_q\bigg).
\end{equation*}
Substituting these identities and the values of
$Q_1,Q_2,Q_3$ into \eqref{tilde_Pol} proves \eqref{tilde_first_polynomials}.

\begin{corollary}
Let $q\in\mathbb Z_{\geq1}$, $X>0$, and
$\theta=1-1/(12\log10)$. Recall definition \eqref{functions} and the values
\eqref{AB_tilde}. Then
\begin{align*}
\big|\mathbf{\tilde m}_q^{(2)}(X)
-\widetilde Q_{2,q}(\log X)\big|
&\leq
\mathpzc p_{1/2}(q)
\frac{4e^{\frac{\gamma}{2}-1}\cdot 3.575}{\sqrt{X}}
+\mathpzc p_\theta(q)
\frac{2.989\cdot 1.693 \ \mathds{1}_{X\geq 10^{12}}(X)}
{10^4\log X},\\
\big|\mathbf{\tilde m}_q^{(3)}(X)
-\widetilde Q_{3,q}(\log X)\big|
&\leq
\mathpzc p_{1/2}(q) 
\frac{40.45\cdot 3.575}{\sqrt{X}}
+\mathpzc p_\theta(q) 
\frac{1.9583\cdot 1.693 \ \mathds{1}_{X\geq 10^{12}}(X)}
{10^3\log X}.
\end{align*}
\end{corollary}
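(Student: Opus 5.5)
This corollary is the specialisation of Theorem \ref{tilde_general_bound} to $k=2$ and $k=3$, so the plan is to invoke that theorem and then make the constants and polynomials explicit. For $k\geq 2$, \eqref{AB_tilde} gives $(\tilde A_k,\tilde B_k)=(3.575\,A_k,\,1.693\,B_k)$, so for every $q\in\mathbb Z_{\geq1}$ and $X>0$,
\begin{equation*}
\big|\mathbf{\tilde m}_q^{(k)}(X)-\widetilde Q_{k,q}(\log X)\big|
\leq\mathpzc p_{1/2}(q)\frac{3.575\,A_k}{\sqrt X}
+\mathpzc p_\theta(q)\frac{1.693\,B_k\,\mathds{1}_{X\geq10^{12}}(X)}{\log X}.
\end{equation*}

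For $k=2$ the constants come straight from \eqref{AB}: $A_2=4e^{\frac{\gamma}{2}-1}$ and $B_2=2.989/10^4$. Substituting these gives the first inequality. For $k=3$ we use the values already recorded in the corollary following Theorem \ref{general_polynomial}, namely $A_3\leq 40.4500$ and $B_3\leq 1.9583/10^3$. These come from evaluating \eqref{Ak_rec} and \eqref{Bk_rec} at $k=3$, using $U_1$, $V_1$, $|\gamma_0|$, $|\gamma_1|$ and the values $A_1,A_2,B_1,B_2$. Since the right-hand side of the bound is increasing in $A_3$ and $B_3$, replacing them by these upper bounds gives the second inequality.

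The polynomials $\widetilde Q_{2,q}$ and $\widetilde Q_{3,q}$ are the ones made explicit in \eqref{tilde_first_polynomials}. They follow from \eqref{tilde_Pol} together with the computed values of $H_q(1)$, $H_q'(1)$ and $H_q''(1)$ in terms of $\mathfrak a_q$ and $\mathfrak b_q$, so no further work is needed there.

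The only step needing any care is confirming that the rounded numerical value $40.4500$ for $A_3$ (and likewise $1.9583\cdot10^{-3}$ for $B_3$) is a genuine upper bound and not a rounded-down one. If so, the corollary is immediate; otherwise one would recompute $A_3=3|\gamma_1|A_1+3\cdot2\,|\gamma_0|A_2+6\sqrt2\,U_1$ and the analogous expression for $B_3$ directly.
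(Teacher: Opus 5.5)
Your proposal is correct and is how the paper obtains this corollary: you specialise Theorem \ref{tilde_general_bound} to $k=2,3$ with $(\tilde A_k,\tilde B_k)=(3.575\,A_k,\,1.693\,B_k)$, taking $A_2,B_2$ from \eqref{AB} and $A_3,B_3$ from the corollary after Theorem \ref{general_polynomial}. One slip is in your fallback formula, where the binomial weights $2$ and $1$ are swapped: \eqref{Ak_rec} at $k=3$ gives $A_3=6|\gamma_1|A_1+3|\gamma_0|A_2+6\sqrt2\,U_1\approx 40.44999$, whereas your version gives about $43.63$ and would wrongly suggest the constant fails; likewise $B_3\approx 1.95804\cdot 10^{-3}$, so both rounded constants are genuine upper bounds.
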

\begin{remark} The preceding corollary sharpens the estimate of $\mathbf{\tilde m}_q^{(2)}(X)$ in \cite[Lemma 3.5]{SZA*22} .
\end{remark}

\bibliographystyle{plain}
\bibliography{Official}

@article{BA12,
  author  = {Balazard, M.},
  title   = {Elementary remarks on {M}\"obius' function},
  journal = {Proc. Steklov Inst. Math.},
  volume  = {276},
  number  = {1},
  year    = {2012},
  pages   = {33--39},
  doi     = {10.1134/s008154381201004x}
}

@article{BriggsChowla*55,
  author  = {Briggs, W.E. and Chowla, S.},
  title   = {The power series coefficients of {$\zeta(s)$}},
  journal = {Amer. Math. Monthly},
  volume  = {62},
  number  = {5},
  year    = {1955},
  pages   = {323--325},
  doi     = {10.2307/2307036}
}

@misc{ChirreHelfgott*25a,
  author        = {Chirre, A. and Helfgott, H.A.},
  title         = {Optimal bounds for sums of bounded arithmetic functions},
  year          = {2025},
  doi           = {10.48550/arXiv.2511.14736},
  url           = {https://arxiv.org/abs/2511.14736},
  eprint        = {2511.14736},
  archiveprefix = {arXiv},
  primaryclass  = {math.NT},
  note          = {Preprint, \url{https://arxiv.org/abs/2511.14736}}
}

@article{Daval25,
  author  = {Daval, F.},
  title   = {Explicit conversions between summatory functions of the {M}\"{o}bius function},
  journal = {Comb. Number Theory},
  volume  = {14},
  number  = {2},
  year    = {2025},
  pages   = {163--188},
  doi     = {10.2140/cnt.2025.14.163}
}

@article{Graham*78,
  author  = {Graham, S.W.},
  title   = {An asymptotic estimate related to {S}elberg's sieve},
  journal = {J. Number Theory},
  volume  = {10},
  number  = {1},
  year    = {1978},
  pages   = {83--94},
  doi     = {10.1016/0022-314x(78)90010-0}
}

@unpublished{Helfgott19,
  author  = {Helfgott, H.A.},
  title   = {The ternary {Goldbach} problem},
  year    = {2019},
  url     = {https://webusers.imj-prg.fr/~harald.helfgott/anglais/book.html},
  note    = {Version of December 16, 2019; accepted for publication in Annals of Mathematics Studies}
}

@article{RA13,
  author  = {Ramar\'{e}, O.},
  title   = {Explicit estimates on the summatory functions of the {M}oebius function with coprimality restrictions},
  journal = {Acta Arith.},
  volume  = {165},
  number  = {1},
  year    = {2014},
  pages   = {1--10},
  doi     = {10.4064/aa165-1-1}
}

@article{RamareZuniga-Alterman*26,
  author  = {Ramar\'e, O. and Zuniga-Alterman, S.},
  title   = {{M}\"obius function and primes: an identity factory with applications},
  journal = {Math. Comp.},
  year    = {2026},
  doi     = {10.1090/mcom/4216},
  issn    = {0025-5718},
  note    = {Published electronically on July 23, 2026}
}

@article{SZA*22,
  author  = {Zuniga-Alterman, S.},
  title   = {On a logarithmic sum related to the {Selberg} sieve},
  journal = {Acta Arith.},
  volume  = {202},
  number  = {2},
  year    = {2022},
  pages   = {123--160},
  doi     = {10.4064/aa200712-22-6}
}

\Addresses

\end{document}